\newtheorem{theorem}{Theorem}[section]
\newtheorem{lemma}[theorem]{Lemma}
\theoremstyle{definition}
\theoremstyle{remark}
\newtheorem{remark}[theorem]{Remark}
\numberwithin{equation}{section}
\begin{document}
\setcounter{page}{1}

\title[ Oscillating  singular integral  on compact Lie groups revisited]{Oscillating  singular integral operators on compact Lie groups revisited}

\author[D. Cardona]{Duv\'an Cardona}
\address{
  Duv\'an Cardona:
  \endgraf
  Department of Mathematics: Analysis, Logic and Discrete Mathematics
  \endgraf
  Ghent University, Belgium
  \endgraf
  {\it E-mail address} {\rm duvanc306@gmail.com, duvan.cardonasanchez@ugent.be}
  }
  
\author[M. Ruzhansky]{Michael Ruzhansky}
\address{
  Michael Ruzhansky:
  \endgraf
  Department of Mathematics: Analysis, Logic and Discrete Mathematics
  \endgraf
  Ghent University, Belgium
  \endgraf
 and
  \endgraf
  School of Mathematical Sciences
  \endgraf
  Queen Mary University of London
  \endgraf
  United Kingdom
  \endgraf
  {\it E-mail address} {\rm michael.ruzhansky@ugent.be, m.ruzhansky@qmul.ac.uk}
  }

\thanks{The authors are supported  by the FWO  Odysseus  1  grant  G.0H94.18N:  Analysis  and  Partial Differential Equations and by the Methusalem programme of the Ghent University Special Research Fund (BOF)
(Grant number 01M01021). Michael Ruzhansky is also supported  by EPSRC grant 
EP/R003025/2.
}

     \keywords{Calder\'on-Zygmund operator, Weak (1,1) inequality, Oscillating singular integrals}
     \subjclass[2010]{35S30, 42B20; Secondary 42B37, 42B35}

\begin{abstract} In \cite[Theorem 2']{Fefferman1970} Charles Fefferman has proved the weak (1,1) boundedness for a class of oscillating singular integrals that includes the oscillating spectral multipliers of the Euclidean Laplacian $\Delta,$ namely, operators of the form 
\begin{equation}
  T_{\theta}(-\Delta):=  (1-\Delta)^{-\frac{n\theta}{4}}e^{i (1-\Delta)^{\frac{\theta}{2}}},\,0\leq \theta <1.
\end{equation}
The aim of this work is to extend Fefferman's result  to oscillating singular integrals  on any arbitrary compact Lie group. We also consider applications to oscillating spectral multipliers of the Laplace-Beltrami operator.  The proof of our main theorem illustrates the delicate relationship between the condition on the kernel of the operator, its Fourier transform (defined in terms of the representation theory of the group) and the microlocal/geometric properties of the group.
\end{abstract} 

\maketitle

\tableofcontents
\allowdisplaybreaks

\section{Introduction}

\subsection{Outline}
This work deals with the weak (1,1) boundedness of oscillating singular integrals on compact Lie groups by extending to the non-commutative setting the paradigm of the oscillating singular integrals introduced by Fefferman and Stein in \cite{Fefferman1970,FeffermanStein1972}, which among other things, generalises some classical conditions introduced by Calder\'on and Zygmund \cite{CalderonZygmund1952} and  H\"ormander \cite{Hormander1960}.
One of the novelties of this work is that we make use of the geometric properties of the group, of  its microlocal analysis, and of its representation theory.

On compact Lie groups, oscillating singular integrals arose as generalisations of the  oscillating spectral multipliers, which are linear operators defined by the spectral calculus via
\begin{equation}\label{Spectral:LaplaceBO}
  T_{\theta}(\mathcal{L}_G):=  (1+\mathcal{L}_G)^{-\frac{n\theta}{4}}e^{i (1+\mathcal{L}_G)^{\frac{\theta}{2}}},\,0\leq \theta <1,
\end{equation}where $\mathcal{L}_G:=-(X_1^2+\cdots X_n^2)$ is the positive Laplace-Beltrami operator on a compact Lie group $G.$

The problem of finding boundedness criteria for Fourier multipliers (left-convolution operators) has been widely  investigated for a long time. On a compact Lie group $G,$ the problem has been considered for central operators by N. Weiss \cite{NWeiss}, Coifman and Weiss \cite{CoifmanWeiss2},   Cowling and Sikora \cite{CowlingSikora} in the case of $G=\textnormal{SU}(2),$ and Chen and Fan \cite{Chen}. However, only  symbol criteria for general Fourier multipliers (and for pseudo-differential operators) on compact Lie groups, making use of the difference structure of the unitary dual $\widehat{G}$ of $G,$ were firstly proved by the second author and Wirth in \cite{Ruz}, and further generalisations were established in  \cite{Cardona2,CR20,CR211,RuzhanskyDelgado2017} and in the works \cite{CardonaDelgadoRuzhansky,CRgraded,FR,Guorong} for the setting of graded Lie groups. We refer the reader to \cite{alexo,CR20,Sikora} and \cite{MartiniMullerNicolussi} for an extensive list of references on the subject as well as for a historical perspective, in particular in the setting of central operators. In particular, the work \cite{RobertoBramati} introduces an interesting family of multipliers covering e.g. the H\"ormander-Mihlin condition.

Before presenting the contributions of this paper, let us review the classical result due to Charles Fefferman which is of central interest for this work,  see \cite[Page 23]{Fefferman1970}. In the Euclidean setting,  a convolution operator $T:f\mapsto f\ast K,$ $f\in C^{\infty}_0(\mathbb{R}^n),$ with $K$ being a distribution of compact support and locally integrable outside the origin, 
and satisfying the conditions
\begin{equation}\label{decay}
    |\widehat{K}(\xi)|=O((1+|\xi|)^{-\frac{n\theta}{2}}),\quad 0\leq \theta<1,
\end{equation}and 
\begin{equation}\label{FeffCond}
   [K]_{H_{\infty,\theta}}:=  \sup_{0<R\leq 1}\Vert\, \smallint\limits_{|x|\geq 2R^{1-\theta}}|K(x-y)-K(x)|dx \Vert_{L^\infty(B(0,R),\,dy)}  <\infty,
\end{equation}is called of oscillating type. Here, $\widehat{K}:=\mathscr{F}_{\mathbb{R}^n}K$ denotes the Fourier transform of the distribution $K.$ The $L^p$-properties for convolution operators of this form were firstly studied by Hardy \cite{Hardy1913}, 
Hirschman \cite{Hirschman1956} and Wainger  \cite{Wainger1965}, in the particular scenario of Fourier multipliers with symbols  of the form
\begin{equation*}
    \widehat{K}(\xi)={(1+|\xi|^2)^{-\frac{n\theta}{4}}}{e^{i(1+|\xi|^2)^{\frac{\theta}{2}}}},\,\,\quad 0<\theta<1.
\end{equation*}The corresponding operators  are oscillating multipliers of the positive Euclidean Laplacian $-\Delta=\mathcal{L}_{\mathbb{R}^n},$ namely, operators of the form 
\begin{equation}\label{Spect:multu}
  T_{\theta}(-\Delta):=  (1-\Delta)^{-\frac{n\theta}{4}}e^{i (1-\Delta)^{\frac{\theta}{2}}},\,0\leq \theta <1.
\end{equation}
One of the fundamental contributions to the subject, came with the work   \cite{Fefferman1970} by Charles Fefferman where he established the   weak (1,1)  boundedness of a class of oscillating singular integrals that includes the oscillating spectral multipliers of the Laplacian \eqref{Spect:multu}. It is also important to mention that Fefferman and Stein, when extending the theory of Hardy spaces to Euclidean spaces of several variables, observed that the theory of oscillating singular integrals in \cite{Fefferman1970} also begets bounded operators from the Hardy space $H^1$ into $L^1,$ see \cite{FeffermanStein1972} for details. 

We also observe that although it does  not appear in the hypotheses of Theorem 2' of \cite{Fefferman1970}, Fefferman has assumed  in the proof of such a statement (see \cite[Page 23]{Fefferman1970}) that the support of $K$ is small. For instance, his assumption says that $ \textnormal{diam}(\textnormal{supp}(K))<1.$ This assumption in the Euclidean setting is not restrictive because one can use the natural dilation structure of $\mathbb{R}^n,$ and the argument in \cite[Page 23]{Fefferman1970} to reduce the analysis of oscillating convolution kernels with compact support of arbitrary size to distributions with small support.

\subsection{Fefferman's approach}
Next, we review the fascinating technique developed by Fefferman in his {\it Acta}'s paper  \cite{Fefferman1970} for the proof of the aforementioned weak (1,1) estimate. As it was pointed out by  Stein in \cite[Page 1257]{AdHonorem}, such a technique became a subject of much wider interest, as
it was adapted to various other problems e.g. by S. Chanillo,
M. Christ, Rubio de Francia, Seeger,  Sogge and Stein \cite{SSS}, etc.

For this, let us fix a  convolution operator $T$ with kernel $K$ satisfying \eqref{decay} and \eqref{FeffCond} and assume that its support is small, for instance, assume that 
\begin{equation}\label{diam:Fefferman}
    \textnormal{diam}(\textnormal{supp}(K))<1.
\end{equation}
To begin with he decomposed an arbitrary function $f\in L^{1}(\mathbb{R}^n),$ for a fixed $\alpha>0,$ according to the Calder\'on-Zygmund decomposition theorem, in the standard way, that is $f=g+b,$ $b:=\sum_{j}b_j,$ where the $L^2$-norm of $g$ is bounded from above by $\alpha,$ and the $b_j$'s are supported on disjoint dyadic cubes $I_j$'s in such a way that the following two properties are  satisfied
\begin{equation}
   \frac{1}{|I_j|}\smallint\limits_{I_j}|b_j(x)|dx\sim \alpha, \quad \smallint\limits_{I_j}b_j(x)dx=0. 
\end{equation}
When estimating $|\{x\in \mathbb{R}^n:|Tf(x)|>\alpha\}|,$ he started with the standard inequality
$$|\{x\in \mathbb{R}^n:|Tf(x)|>\alpha\}|\leq |\{x\in \mathbb{R}^n:|Tb(x)|>\alpha/2\}|+|\{x\in \mathbb{R}^n:|Tg(x)|>\alpha/2\}|. $$ Then, he observed that
$T(g)$ can be estimated from above with the $L^1$-norm of $f$ using the $L^2$-theory. As for the estimate of $T(b_j),$ the main idea  in Fefferman's argument was the consideration of a family of measurable sets $I_{j}^*$ such that,
\begin{equation}
    \textnormal{center}[I_j]=\textnormal{center}[I_j^*],\quad \textnormal{diameter}[I_j^*]\sim\textnormal{diameter}[I_j]^{1-\theta}.
\end{equation}By making a suitable geometric reduction of the support of $K,$ he proved that only cubes $I_j$ with $\textnormal{diam}[I_j]<1$ were significant when estimating $T(b_j).$ Moreover, the contribution of $T(b_j)$ outside the set $I_j^{*}$ can be handled in view of the kernel condition \eqref{FeffCond}.

About the critical contribution of $T(b_j)$ in the set $I_{j}^*\setminus I_j,$ he introduced a nice replacement $\tilde{b}_{j}$ of $b_j,$ and another one $\tilde{b}=\sum_j \tilde{b}_{j}$ of $b,$ and he was able to prove that
\begin{equation}
    \Vert T(\tilde{b})\Vert_{L^2}\lesssim\Vert(-\Delta)^{-\frac{n\theta}{4}}\tilde{b}\Vert_{L^2}\lesssim \alpha\Vert f\Vert_{L^1},
\end{equation} by
combining the $L^2$-theory, a suitable decomposition of the function $(-\Delta)^{-\frac{n\theta}{4}}\tilde{b}=F_1+F_2,$ and the Sobolev inequality. Putting all these estimates together he proved the weak (1,1) type of $T.$

\subsection{Compact Lie groups setting}
It is natural to investigate whether these results for oscillating singular integrals on $\mathbb{R}^n$ can be extended to more general manifolds. In particular, in view of the behaviour of the Fourier transform of the kernel \eqref{decay} one can analyse the problem in the setting of compact Lie groups where the group Fourier transform is defined in terms of the representation theory of the group.  

On a compact Lie group $G,$ an oscillating singular integral is a left-invariant operator $T:C^\infty(G)\rightarrow \mathscr{D}'(G)$ with right convolution kernel $K$ satisfying the  estimate
\begin{equation}\label{CZ:groups}
        [K]_{H_{\infty,\theta}(G)}:=\sup_{0<R\leq 1}\Vert \smallint\limits_{|x|\geq 2R^{1-\theta}}|K(y^{-1}x)-K(x)|dx\Vert_{L^\infty(B(0,R),\,dy)} <\infty,
    \end{equation}for some $0\leq \theta<1.$ We have denoted by $|x|=d(x,e)$ the geodesic distance from $x\in G$ to the neutral element $e\in G.$

In order to study the boundedness of oscillating singular integrals in the non-commutative setting, one could adopt the argument of  Fefferman and to study its extension to compact Lie groups. Nevertheless, when extending such a  construction we found some  obstructions related with the structure of the group $G.$ More precisely, in Fefferman's argument, the $\tilde{b}_j's$ are defined by the convolution $b_j\ast \phi_{j},$ where roughly speaking, any $\phi_j$ is given by 
$$ \phi_j(y):=\textnormal{diam}[I_j]^{-n/(1-\theta)}\phi(\textnormal{diam}[I_j]^{-1/(1-\theta)}\cdot y),\quad \smallint \phi=1,\quad \textnormal{supp}[\phi]\subset\{x:|x|\leq 1\}.  $$ By taking into account the geometry of the group, we observe that there is not a global action $\mathbb{R}^{+}\times G\rightarrow G$ of $\mathbb{R}^{+}$ into $G,$ that allows us to define the elements $r\cdot y\in G,$ for $r>0$ and $y\in G,$ in a compatible way with Fefferman's argument.  Of particular interest are the dilations factors $$r=\textnormal{diam}[I_j]^{-1/(1-\theta)}>1.$$ Indeed, only on small neighborhoods $\nu$ of $0\in \mathfrak{g},$ and $\nu'$ of the  neutral element $e\in G,$ the exponential mapping $\textnormal{exp}:\nu\rightarrow \nu'$ is a diffeomorphism, and a local family of dilations $D_{r}:\nu\rightarrow \nu,$ with $0<r\leq 1,$ denoted by  $y\in \nu\mapsto D_{r}(y)=r\cdot y ,$ can be defined via $r\cdot y=\exp(r\exp^{-1}(y)).$ One can translate these dilations to small neighborhoods of any point of the group by using the multiplication operation on $G$, but, still, the case of dilations by factors $r>1$ cannot be covered with this local construction.

However, inspired by the approach developed by Coifman and Weiss \cite{CoifmanWeiss} and by Coifman and De Guzm\'an \cite{CoifmandeGuzman}, we observe that for the scenario of compact Lie groups  by taking the family of functions
\begin{equation}\label{jump:function}
    \phi_{j}(y)=\frac{1}{|B(e,R_j)|}1_{B(e,R_j)},\quad R_j\sim \textnormal{diam}[I_j]^{1/(1-\theta)},
\end{equation}will provide the necessary properties for extending  Fefferman's argument.

Since our analysis is local we will assume the following hypothesis:
\begin{itemize}
    \item[(H):] Assume that $K$ is a distribution of compact support and that its diameter is small enough, for instance, we assume that $\textnormal{supp}(K)$ is  contained in a neighborhood $\nu$ of the identity $e\in G,$ in  such a way  that  $\exp:\nu'=\exp^{-1}(\nu)\rightarrow \nu$  is a diffeomorphism, and that $\textnormal{diam}(\textnormal{supp}(K))<c,$ where $0<c\leq 1.$ 
\end{itemize}

The main theorem of this work is the following. Here $\widehat{G}$ denotes the unitary dual of $G$ and the elliptic weight $\langle\xi\rangle,$ $[\xi]\in \widehat{G},$ is defined in terms of the spectrum of the Laplacian on $G,$ see Section \ref{preliminaries} for details. 

\begin{theorem}\label{main:th}Let $G$ be a compact Lie group of dimension $n,$ and let  $T:C^\infty(G)\rightarrow\mathscr{D}'(G)$  be a left-invariant operator with right-convolution kernel $K\in L^1_{\textnormal{loc}}(G\setminus\{ e\})$ satisfying the small support condition $\textnormal{(H)}.$ Let us consider that for  $0\leq \theta<1,$ $K$   satisfies the group Fourier transform condition
\begin{equation}\label{LxiG}
  \exists C>0,\quad \forall [\xi]\in \widehat{G},\quad   \Vert \widehat{K}(\xi)\Vert_{\textnormal{op}}\leq C\langle \xi\rangle^{-\frac{n\theta}{2}},
\end{equation}
and the oscillating  H\"ormander condition
    \begin{equation}\label{GS:CZ:cond}
        [K]_{H_{\infty,\theta}(G)}:=\sup_{0<R\leq 1}\sup_{|y|\leq R} \smallint\limits_{|x|\geq 2R^{1-\theta}}|K(y^{-1}x)-K(x)|dx <\infty.
    \end{equation}   Then  $T$ admits an extension  of weak $(1,1)$ type. 
\end{theorem}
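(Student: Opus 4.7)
The plan is to adapt Fefferman's weak $(1,1)$ scheme to the compact Lie group setting, using the ball-averaging replacement \eqref{jump:function} in place of the Euclidean dilations of \cite{Fefferman1970}. I would start by performing a Calder\'on--Zygmund decomposition of $f\in L^1(G)$ at level $\alpha>0$: $f=g+b$ with $b=\sum_j b_j$, where each $b_j$ is supported on a pairwise disjoint family of dyadic (Christ-type) cubes $I_j\subset G$ satisfying $\|g\|_{L^\infty}\lesssim\alpha$, $\int_{I_j}b_j=0$, $\int_{I_j}|b_j|\lesssim\alpha|I_j|$ and $\sum_j|I_j|\lesssim\alpha^{-1}\|f\|_{L^1}$. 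The standard splitting
\[
|\{|Tf|>\alpha\}|\leq|\{|Tg|>\alpha/2\}|+|\{|Tb|>\alpha/2\}|
\]
reduces matters to estimating each term. The good function is handled by Chebyshev together with the $L^2(G)$ boundedness of $T$, which follows from \eqref{LxiG} (since $\langle\xi\rangle\geq 1$ and $\theta\geq 0$ give $\sup_{[\xi]\in\widehat{G}}\|\widehat{K}(\xi)\|_{\textnormal{op}}<\infty$) via Plancherel on $\widehat{G}$.

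For the bad function, set $R_j:=\textnormal{diam}[I_j]^{1/(1-\theta)}$ and let $I_j^*$ be the geodesic ball centred at the centre of $I_j$ with diameter $\sim\textnormal{diam}[I_j]^{1-\theta}$, so that, for $y\in I_j$, the integral in \eqref{GS:CZ:cond} with $R\sim\textnormal{diam}[I_j]$ ranges over $G\setminus I_j^*$. The cancellation $\int b_j=0$ combined with \eqref{GS:CZ:cond} then yields
\[
\sum_j\int_{G\setminus I_j^*}|Tb_j(x)|\,dx\lesssim [K]_{H_{\infty,\theta}(G)}\sum_j\|b_j\|_{L^1}\lesssim \|f\|_{L^1},
\]
so the part of $|\{|Tb|>\alpha/2\}|$ lying outside $\bigcup_j I_j^*$ is under control via Chebyshev.

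The critical piece is the contribution on $\bigcup_j I_j^*$. Following the idea outlined in the introduction, I would introduce the smoothed replacement
\[
\tilde{b}_j:=b_j\ast\phi_j,\qquad \phi_j(y)=|B(e,R_j)|^{-1}\mathbf{1}_{B(e,R_j)}(y),\qquad \tilde{b}:=\sum_j\tilde{b}_j,
\]
and decompose $Tb=T\tilde{b}+T(b-\tilde{b})$. A direct convolution manipulation combined once more with \eqref{GS:CZ:cond} should give $\|T(b-\tilde{b})\|_{L^1(G)}\lesssim \|f\|_{L^1}$, accounting for the second term by Chebyshev. The decisive bound is then
\[
\|T\tilde{b}\|_{L^2(G)}\leq C\,\bigl\|(1+\mathcal{L}_G)^{-n\theta/4}\tilde{b}\bigr\|_{L^2(G)},
\]
which is immediate from \eqref{LxiG} and Plancherel on $\widehat{G}$. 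I would then split $(1+\mathcal{L}_G)^{-n\theta/4}\tilde{b}=F_1+F_2$ according to the scale of $R_j$ (separating large and small scales) and estimate each $F_k$ in $L^2$ using Sobolev-type inequalities on $G$ together with pointwise bounds for the Bessel kernel applied to $\phi_j$. The target bound $\|T\tilde{b}\|_{L^2}^2\lesssim\alpha\|f\|_{L^1}$ closes the argument via Chebyshev.

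The main obstacle is precisely the scale-by-scale estimate for $(1+\mathcal{L}_G)^{-n\theta/4}\phi_j$: in the Euclidean case one exploits the exact homogeneity of $\phi$ under the dilation group, but on $G$ the $\phi_j$ are genuine ball averages (as forced by the absence of a global dilation action) and their interaction with the spectral multiplier must be controlled through heat-kernel or Riesz-potential estimates adapted to the small-ball geometry of $G$ and to the representation-theoretic definition of the elliptic weight $\langle\xi\rangle$. Once this scale-dependent control is in place, the near-orthogonality of the $\tilde{b}_j$'s at different scales can be exploited to sum the contributions and close the weak-type bound.
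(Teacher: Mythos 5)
Your proposal identifies the right building blocks --- the Coifman--Weiss Calder\'on--Zygmund decomposition, the ball-averaging replacement $\tilde{b}_j = b_j \ast \phi_j$ with $\phi_j = |B(e,R_j)|^{-1}\mathbf{1}_{B(e,R_j)}$, and the $L^2$-reduction $\|T\tilde{b}\|_{L^2} \lesssim \|(1+\mathcal{L}_G)^{-n\theta/4}\tilde{b}\|_{L^2}$ via Plancherel --- but there is a genuine gap in the bookkeeping around the exceptional set. You want to discard $\bigcup_j I_j^*$ with $I_j^*$ of diameter $\sim \textnormal{diam}(I_j)^{1-\theta}$; for $0<\theta<1$ and small $I_j$ each such $I_j^*$ is far larger than $I_j$, and $\sum_j |I_j^*| \sim \sum_j \textnormal{diam}(I_j)^{n(1-\theta)}$ is \emph{not} controlled by $\alpha^{-1}\|f\|_{L^1}$, so removing $\bigcup_j I_j^*$ by Chebyshev cannot close the argument. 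To compensate you claim a \emph{global} estimate $\|T(b-\tilde{b})\|_{L^1(G)} \lesssim \|f\|_{L^1}$, but this is false: condition \eqref{GS:CZ:cond} controls only $\int_{|z|>2R^{1-\theta}}|K(y^{-1}z)-K(z)|\,dz$, and for $x$ near $I_j$ the point $z=y^{-1}x$ lies inside $|z|<2R_j^{1-\theta}$, where there is no control. The correct structure (and what the paper does) is to take the exceptional set to be the \emph{doubled} balls $I^* := \bigcup_j B(x_j,2r_j)$, whose measure is controlled by doubling, and to deploy the replacement $\tilde{b}_j$ already at this stage: for $x\in G\setminus I^*$ and $y\in I_j$ one has $|y^{-1}x|\gtrsim\textnormal{diam}(I_j)\sim R_j^{1-\theta}$ while $\phi_j$ is supported in $B(e,R_j)$, exactly the configuration in \eqref{GS:CZ:cond}, which yields $\|T(b-\tilde{b})\|_{L^1(G\setminus I^*)} \lesssim [K]_{H_{\infty,\theta}(G)}\|f\|_{L^1}$. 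Once the exceptional set is the doubled-ball union, your separate direct estimate outside the $\theta$-enlarged $I_j^*$ becomes redundant.

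Two further substantive discrepancies. First, your proposed splitting of $(1+\mathcal{L}_G)^{-n\theta/4}\tilde{b}$ ``by scale of $R_j$'' is not the decomposition that closes: the paper splits it \emph{spatially}, taking $F_1(x)=\sum_{j:x\sim I_j}\tilde{b}_j\ast k_\theta(x)$ (contributions from $I_j$'s near $x$) and $F_2(x)=\sum_{j:x\nsim I_j}\tilde{b}_j\ast k_\theta(x)$, where $k_\theta$ is the kernel of $(1+\mathcal{L}_G)^{-n\theta/4}$; the finite-overlap constant $M_0$ of the family $\{I_j\}$ is the engine behind $\|F_1\|_{L^2}^2$, and $F_2$ is handled by interpolating $\|F_2\|_{L^2}^2\leq\|F_2\|_{L^1}\|F_2\|_{L^\infty}$, exploiting integrability of $k_\theta$ (a pseudo-differential kernel of negative order) and the observation that $\phi_j\ast k_\theta(y^{-1}x)$ is essentially constant over $I_j$ when $x\nsim I_j$. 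Second, the crucial scaling estimate $\|(1+\mathcal{L}_G)^{-n\theta/4}\phi_j\|_{L^2(G)}\lesssim |I_j|^{-1/2}$, which you leave to unspecified heat-kernel or Riesz-potential bounds, is proved in the paper by first making a geometric reduction so $\textnormal{supp}(K)$ is small (partition of unity plus translation), transferring to $\mathbb{R}^n$ via the exponential chart, rescaling $\phi_j'(X)=|B(0,R_j')|^{-1}\psi_j'(X/R_j')$, and using Euclidean Plancherel --- and, crucially and specific to the compact setting, the $L^2(G)$-boundedness of the homogeneous power $(-\mathcal{L}_G)^{-n\theta/4}$ obtained from Stein's projection identity, which is what lets the homogeneous Riesz potential extract the factor $R_j^{-n(1-\theta)}$. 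Without this compact-group input the $R_j$-dependence cannot be isolated, precisely because $G$ carries no global dilation taking $\phi_j$ to a fixed profile.
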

 
\begin{remark}
 It was proved in \cite{CR20221} that under the hypothesis in Theorem \ref{main:th},  $T$ is bounded from the Hardy space $H^1(G)$ into $L^1(G).$ So, in the case of compact Lie groups our main Theorem \ref{main:th} together with the results in \cite{CR20221} provide a complete perspective on the subject, related with the H\"ormander condition in \eqref{GS:CZ:cond} for all $0\leq \theta <1$.  We observe that the case $\theta=1$ is outside of the analysis in our approach. It is related to the wave operator for the Laplace-Beltrami operator and in view of the weak (1,1) estimate for Fourier integral operators by Tao \cite{Tao}, it is expected that the  right decay condition in \eqref{LxiG} for $\theta=1$ could be the one with the order $-(n-1)/2$ (instead of $-n/2$).
\end{remark} 
\begin{remark}
 The condition that the support of the distribution $K$ is contained in a neighborhood $\nu$ of the identity $e\in G,$ in  such a way  that  that  $\exp:\nu'=\exp^{-1}(\nu)\rightarrow \nu=\exp(\nu)$  is a diffeomorphism guarantees a local analysis. 
\end{remark}
\begin{remark}
We also observe  that the relevant contribution of Theorem \ref{main:th} is the case where $0<\theta <1.$ Indeed, for $\theta=0,$ the Fourier transform condition in \eqref{LxiG} can be replaced for the $L^2$-boundedness of the operator $T,$ and the condition in \eqref{GS:CZ:cond} is reduced to the {\it{ smoothness H\"ormander condition}} which can be analysed in the setting of the Calder\'on-Zygmund theory on spaces of homogeneous type in the sense of Coifman and Weiss \cite{CoifmanWeiss}. 
\end{remark}
\begin{remark}One of the differences between the approach due to Fefferman  in \cite{Fefferman1970} and our proof of Theorem \ref{main:th} lies in our application of the
 $L^2$-boundedness of the operator $(-\mathcal{L}_G)^{z}$ when $\textnormal{Re}(z)<0,$ in view of the Stein identity (see \cite[Page 58]{SteinIdentity})    $$(-\mathcal{L}_G)^{z}=\left(-\mathcal{L}_G+\textnormal{Proj}_{\textnormal{Ker}(-\mathcal{L}_G)}\right)^{z}-\textnormal{Proj}_{\textnormal{Ker}(-\mathcal{L}_G)}.$$ See e.g.  Remark 2.4 of \cite{RuzhanskyWirth2015}. Indeed, we have that $(-\mathcal{L}_G)^{z}$ is a pseudo-differential operator on $G$ of order $\textnormal{Re}(z)<0.$ Note that, this argument does not apply in the case of $\mathbb{R}^n$ because the spectrum of the Euclidean Laplacian is continuous and the negative powers of $-\Delta_{\mathbb{R}^n}$ are not pseudo-differential operators, while the powers of  $1-\Delta_{\mathbb{R}^n}$ are pseudo-differential operators with symbols in the Kohn-Nirenberg classes, see e.g. Taylor \cite{Taylorbook1981}. The boundedness of the operator $(-\mathcal{L}_G)^{z}$  on $L^2$ allows us to apply the microlocal analysis of $G.$
\end{remark}
\begin{remark}
In view of the Hopf-Rinow theorem the exponential mapping on $G$ viewed as a Riemannian manifold agrees with the exponential mapping on $G,$ considered as a closed sub-group of (the linear group of unitary matrices) $\textnormal{U}(N)$ for $N$ large enough. In particular  on small neighborhoods $\nu$ of $0\in \mathfrak{g},$ and $\nu'$ of the  neutral element $e\in G,$ the exponential mapping $\textnormal{exp}:\nu\rightarrow \nu'$ is a diffeomorphism. This geometric property of the group will be involved in the proof of our main Theorem \ref{main:th}. 
\end{remark}
\begin{remark}
Clearly $G$ being a compact topological space  is a homogeneous space in the sense of Coifman and Weiss \cite{CoifmanWeiss}. The geometric measure theory of the group will be used due to the existence of Calder\'on-Zygmund type decompositions for any $f\in L^1(G),$ see \cite[Pages 73-74]{CoifmanWeiss}.
\end{remark}
\begin{remark}
We observe that using the theory of Coifman and De Guzm\'an \cite{CoifmandeGuzman},  the second author and Wirth in \cite{Ruz} have proved H\"ormander-Mihlin criteria for Fourier multipliers on compact Lie groups. In that setting, as expected, it was a non-trivial fact that the operators satisfy Calder\'on-Zygmund type conditions  (corresponding to the case $\theta=0$  in Theorem \ref{main:th}). 
\end{remark}
\begin{remark}
For limited-range versions of the Calder\'on-Zygmund theorem we refer the reader to  Baernstein and Sawyer \cite{BS}, Carbery \cite{Carbery}, Seeger \cite{Seeger}, and  Grafakos, Honz\'ik, 
Ryabogin \cite{Grafakosetal}. We observe that  the case in the Euclidean setting, the case $\theta=0$ includes Fourier multipliers satisfying H\"ormander-Mihlin conditions. Improvements for the H\"ormander condition and conditions of H\"ormander-Mihlin type in the Euclidean framework  can be found in Grafakos \cite{Grafakos}. We refer the reader to \cite{CR20222} for the boundedness properties of oscillating singular integrals on $\mathbb{R}^n.$
\end{remark}
This paper is organised as follows. In the short Section \ref{preliminaries} we present the preliminaries on the Fourier analysis on compact Lie groups. Our main theorem is proved in Section \ref{proof:section}. Finally, some examples are given in the case of the torus $\mathbb{T}^n,$ $\textnormal{SU}(2)\cong \mathbb{S}^3,$ and for oscillating Fourier multipliers in Section \ref{Examples:t:su2:OFM}.

\section{Group Fourier transform on compact Lie groups}\label{preliminaries}
In this section we present some preliminaries about the Fourier analysis on compact Lie groups, for this we follow the book \cite[Part III]{Ruz}.

Let $G$ be a compact Lie group and let $dx$  be its normalised left-invariant Haar measure. To define the group Fourier transform let us define the unitary dual $\widehat{G}$ of  $G.$ One reason for this is that the Fourier inversion formula becomes a series over $\widehat{G}.$ Let us denote by  $\xi$  a strongly continuous, unitary and irreducible  representation of $G,$ this means that,
\begin{itemize}
    \item $\xi\in \textnormal{Hom}(G, \textnormal{U}(H_{\xi})),$ for some finite-dimensional vector space $H_\xi\cong \mathbb{C}^{d_\xi},$ i.e. $\xi(xy)=\xi(x)\xi(y)$ and for the  adjoint of $\xi(x),$ $\xi(x)^*=\xi(x^{-1}),$ for every $x,y\in G.$
    \item The map $(x,v)\mapsto \xi(x)v, $ from $G\times H_\xi$ into $H_\xi$ is continuous.
    \item For every $x\in G,$ and $W_\xi\subset H_\xi,$ if $\xi(x)W_{\xi}\subset W_{\xi},$ then $W_\xi=H_\xi$ or $W_\xi=\emptyset.$
\end{itemize} 
In view of the compactness of the group, any strongly continuous unitary representation on $G$ is continuous. 
Let $\textnormal{Rep}(G)$ be the set of unitary, continuous and irreducible representations of $G.$

One can define an equivalence relation on $\textnormal{Rep}(G)$ as follows: {{
\begin{equation*}
    \xi_1\sim \xi_2,\,\xi_1,\xi_2\in \textnormal{Rep}(G)\,\,\Longleftrightarrow\, \exists A\in \textnormal{End}(H_{\xi_1},H_{\xi_2}):\,\,\forall x\in G,\, A\xi_{1}(x)A^{-1}=\xi_2(x). 
\end{equation*}}} With respect to this equivalence relation the quotient
$$
    \widehat{G}:={\textnormal{Rep}(G)}/{\sim},
$$is the unitary dual of $G.$

The Fourier transform $\widehat{f}$ of a distribution $f\in \mathscr{D}'(G)$ is defined via,
\begin{equation*}
    \widehat{f}(\xi)\equiv (\mathscr{F}f)(\xi):=\smallint\limits_{G}f(x)\xi(x)^*dx,\,[\xi]\in \widehat{G}.
\end{equation*}  The Fourier inversion formula
\begin{equation*}
    f(x)=\sum_{[\xi]\in \widehat{G}}d_\xi\textnormal{\textbf{Tr}}[\xi(x)\widehat{f}(\xi)]
\end{equation*} holds if for example $f\in L^1(G).$ In this setting, the Plancherel theorem states that  $\mathscr{F}:L^2(G)\rightarrow L^2(\widehat{G})$ is an isometry, where the inner product on $L^2(\widehat{G})$ is defined via
\begin{equation}
    (f,g)_{L^2(\widehat{G})}:=\sum_{[\xi]\in \widehat{G}}d_\xi\textnormal{\textbf{Tr}}[\widehat{f}(\xi)\widehat{g}(\xi)^*].
\end{equation}So, the Plancherel theorem takes the form
\begin{equation}
    \Vert f\Vert_{L^2(G)}^2=\sum_{[\xi]\in \widehat{G}}d_\xi\|\widehat{f}(\xi)\|_{\textnormal{HS}}^2.
\end{equation}We have denoted by $\|\cdot \|_{\textnormal{HS}}$ the usual Hilbert-Schmidt norm on every representation space $H_\xi.$
\begin{remark}\label{reamrk:eigenvalues}
In the symbol condition \eqref{LxiG}, $$\textnormal{Spect}((1+\mathcal{L}_G)^{\frac{1}{2}}):=\{\langle \xi\rangle:[\xi]\in \widehat{G}\},$$ is the system of eigenvalues of the Bessel potential operator $(1+\mathcal{L}_G)^{\frac{1}{2}}$ associated to the positive  Laplacian on $G,$ which can be defined as follows.      Let $X=\{X_1,\cdots, X_n\}$ be an orthonormal  basis of its Lie algebra $\mathfrak{g}$ with respect to the unique (up to by a constant factor) bi-invariant Riemannian metric $g=(g_x(\cdot,\cdot))_{x\in G}$ on $G.$ The Laplace-Beltrami operator is defined by (minus) the sum of squares
\begin{equation*}
    \mathcal{L}_G=-\sum_{j=1}^{n}X_j^2,
\end{equation*} and it is independent of the choice of the vector space basis $X$ of $\mathfrak{g}.$ 
Note that $\mathcal{L}_G$ is an elliptic operator on $G,$ which admits a self-adjoint extension on $L^2(G,dx).$ Such an extension is an elliptic operator and its spectrum is a discrete set $\{\lambda_{[\xi]}:[\xi]\in \widehat{G}\}$ that can be enumerated by the unitary dual $\widehat{G}$ of the group. By the spectral mapping theorem one has that $\langle \xi\rangle=(1+\lambda_{[\xi]})^{\frac{1}{2}},$ $[\xi]\in \widehat{G}.$ 
\end{remark}
\begin{remark}\label{Rem:Four:Conf}Observe that in view of the Plancherel theorem, the hypothesis \eqref{LxiG}  in Theorem \ref{main:th} implies that the operator $(1+\mathcal{L}_G)^{\frac{n\theta}{4}}T$ admits a bounded extension on $L^2(G
)$. Indeed, the Plancherel theorems implies that
\begin{equation}
     \Vert (1+\mathcal{L}_G)^{\frac{n\theta}{4}}Tf\Vert_{L^2(G)}^2=\sum_{[\xi]\in \widehat{G}}d_\xi\|\langle\xi\rangle^{\frac{n\theta}{2}}\widehat{K}(\xi)\widehat{f}(\xi)\|_{\textnormal{HS}}^2\leq\sum_{[\xi]\in \widehat{G}}d_\xi\|\langle\xi\rangle^{\frac{n\theta}{2}}\widehat{K}(\xi)\|^2_{\textnormal{op}}\|\widehat{f}(\xi)\|_{\textnormal{HS}}^2
\end{equation}and in view of  \eqref{LxiG} we have that
$$ \Vert (1+\mathcal{L}_G)^{\frac{n\theta}{4}}Tf\Vert_{L^2(G)}^2\leq C^2\sum_{[\xi]\in \widehat{G}}d_\xi\|\widehat{f}(\xi)\|_{\textnormal{HS}}^2\lesssim \Vert f\Vert_{L^2(G)}^2 , $$ proving the boundedness of $(1+\mathcal{L}_G)^{\frac{n\theta}{4}}T$ on $L^2(G).$

\end{remark}

\section{Proof of the main theorem}\label{proof:section}

In this section we are going to prove that for a singular integral operator $T$ satisfying \eqref{decay} and \eqref{GS:CZ:cond}  there is a constant $C>0,$ such that \begin{equation}\label{To:proof}
 \Vert T f\Vert_{L^{1,\infty}(G)}:= \sup_{\alpha>0}  \alpha|\{x\in G:|Tf(x)|>\alpha\}|\leq C\Vert f\Vert_{L^1(G)},
\end{equation}with $C$ independent of $f.$ The positive Laplacian on $G$ or on $\mathbb{R}^n$ will be denoted by $\mathcal{L}_G$ and by $\mathcal{L}_{\mathbb{R}^n},$ respectively. The Euclidean Fourier transform on $\mathbb{R}^n$ of a function $V\in L^{1}(\mathbb{R}^n),$ will be denoted by $\mathscr{F}_{\mathbb{R}^n}V(\Xi)=\smallint_{\mathbb{R}^n}e^{-i2\pi X\cdot\Xi }V(X)dX,$ and it will be used at the end of this section.

For this fix $f\in L^1(G),$ and let us consider its Calder\'on-Zygmund decomposition, see Coifman and Weiss \cite[Pages 73-74]{CoifmanWeiss}. So, for any $\gamma,\alpha>0, $ such that

\begin{equation}\label{alphagamma}
\alpha\gamma>    \frac{1}{|G|}\smallint\limits_{G}|f(x)|dx,
\end{equation}
we can have the decomposition  $$f=g+b=g+\sum_{j}b_j$$ where the following properties are satisfied: 
\begin{itemize}
    \item[(1)] $\Vert g \Vert_{L^\infty}\lesssim_{G} \gamma \alpha$ and $\Vert g\Vert_{L^1}\lesssim_{G} \Vert f\Vert_{L^1}.$
    \item[(2)] The $b_j$'s are supported in  open balls $I_j=B(x_j,r_j)$ where they satisfy the cancellation property
    \begin{equation}
        \smallint\limits_{I_j}b_j(x)dx=0.
    \end{equation}
     \item[(3)] Any component $b_j$ satisfies the $L^1$-estimate 
     \begin{equation}
        \Vert b_j\Vert_{L^1}\lesssim_{G} (\gamma \alpha)|I_j|.
\end{equation}
    \item[(4)] The sequence $\{|I_{j}|\}_{j}\in \ell^1$ and
    \begin{equation}\label{I:j}
    \sum_{j} |I_j|\lesssim_{G} (\gamma\alpha)^{-1}    \Vert f\Vert_{L^1}.   
    \end{equation}
   \item[(5)] 
    $$\Vert b\Vert_{L^1}\leq\sum_j  \Vert b_j\Vert_{L^1}\lesssim_{G} \Vert f\Vert_{L^1}.$$
    \item[(6)] There exists $M_0\in \mathbb{N},$ such that any point $x\in G$ belongs at most to $M_0$ balls of the collection $I_j.$
\end{itemize}
 First of all note that given an altitude $\alpha\gamma>0,$ the  Calder\'on-Zygmund decomposition above works in the case where $\alpha\gamma>\frac{1}{|G|}\smallint|f(x)|dx.$ We have denoted by $|G|$ the volume of the group $G$ without assuming that the Haar measure is normalised. 
 
 We want to emphasize that when proving the weak (1,1) boundedness of $T,$ one has to estimate the quantity $|\{x\in G:|Tf(x)|>\gamma\alpha\}|$ only in the case where the Calder\'on-Zymund decomposition in \cite{CoifmanWeiss} is available, that is when $\alpha\gamma>\frac{1}{|G|}\smallint|f(x)|dx.$ Indeed, when estimating  $|\{x\in G:|Tf(x)|>\alpha\gamma\}|$ at any altitude $\alpha\gamma$ with $0<\alpha\gamma\leq \frac{1}{|G|}\smallint|f(x)|dx,$ one trivially has that
\begin{equation}
    |\{x\in G:|Tf(x)|>\alpha\gamma\}|\leq |G|\leq \frac{1}{\alpha\gamma}\smallint\limits_G|f(x)|dx,
\end{equation}as desired.

So, by fixing $\alpha\gamma$ as in \eqref{alphagamma}, note that in terms of $g,$ $b$ and $f$ one has the trivial estimate 
\begin{align*}
    |\{x:|Tf(x)|>\alpha\}|\leq |\{x:|Tg(x)|>\alpha/2\}|+|\{x:|Tb(x)|>\alpha/2\}|. 
\end{align*}
The estimates $\Vert g \Vert_{L^\infty}\lesssim \gamma \alpha$ and $\Vert g\Vert_{L^1}\lesssim \Vert f\Vert_{L^1},$ imply that
\begin{align*}
     \Vert g\Vert_{L^2}^2\lesssim\Vert g\Vert_{L^\infty}\Vert g\Vert_{L^1}\leq (\gamma \alpha)\Vert f\Vert_{L^1}. 
\end{align*}
So, by applying the Chebishev inequality and the $L^2$-boundedness of $T,$ we have 
\begin{align*}
   &|\{x:|Tg(x)|>\alpha/2\}|\lesssim 2^2\alpha^{-2}\Vert Tg \Vert_{L^2}\leq (2\Vert T \Vert_{\mathscr{B}(L^2)})^2\alpha^{-2}\Vert g\Vert_{L^2}^2 \\
  &\leq (2\Vert T \Vert_{\mathscr{B}(L^2)})^2\alpha^{-2}(\gamma \alpha)\Vert f\Vert_{L^1}\lesssim\Vert T \Vert_{\mathscr{B}(L^2)}^2\gamma\alpha^{-1}\Vert f\Vert_{L^1}\\
   &\lesssim_{\gamma} \alpha^{-1}\Vert f\Vert_{L^1}.
\end{align*}
 In what follows,  let us denote   $I^*=\bigcup I_j^*,$ where $I_j^*=B(x_j,2r_j),$  and let us make use of the doubling condition on $G$ in order to have the estimate
$$  |I^*_j|\sim  c^n|I_j|, $$ from which follows that 
$$ |I^*|\lesssim \sum_j|I_j^{*}| \lesssim \sum_j|I_j|\lesssim  \gamma^{-1}\alpha^{-1}\Vert f\Vert_{L^1}.   $$
Consequently, we have the estimates 
$$ |\{x:|Tb(x)|>\alpha/2\}|\leq |I^*|+ |\{x\in G\setminus I^* :|Tb(x)|>\alpha/2\}|  $$
$$ \leq ( 2\sqrt{n})^n \gamma^{-1}\alpha^{-1}\Vert f\Vert_{L^1}+|\{x\in G\setminus I^* :|Tb(x)|>\alpha/2\}| .  $$
$$ \lesssim_{\gamma} \alpha^{-1}\Vert f\Vert_{L^1}+|\{x\in G\setminus I^* :|Tb(x)|>\alpha/2\}| .  $$
So, to conclude the inequality \eqref{To:proof} we have to prove that 
\begin{equation}\label{To:proof:2}
  \sup_{\alpha>0}  \alpha|\{x\in G\setminus I^*:|Tb(x)|>\alpha/2\}|\leq C\Vert f\Vert_{L^1},
\end{equation}with $C$ independent of $f.$ So,  the proof of  Theorem \ref{main:th} consists of estimating the term $$|\{x\in G\setminus I^*:|Tb(x)|>\alpha/2\}|.$$ 

\begin{proof}[Proof of Theorem \ref{main:th}] 

We start by considering only the case $0<\theta<1.$ Indeed, the statement for $\theta=0$ in Theorem \ref{main:th} follows from the fundamental theorem of singular integrals due to Coifman and Weiss, see \cite[Theorem 2.4, Page 74]{CoifmanWeiss}.

From now, let us  suppose that the diameter of the support of $K$ is small, for instance, that $$\textnormal{diam}(\textnormal{supp}(K))<c,$$ where $0<c\leq 1$ is small enough. 
In particular, we can take $c$ small enough in order that we can guarantee that the exponential mapping
\begin{equation}\label{W:EXP}
    \omega=\textnormal{exp}:\nu\rightarrow \nu',\,\,{B}(e,c)\subset\nu',
\end{equation} is a diffeomorphism between two small neighborhoods $\nu'$ and $\nu$ of the identity element $e$ and of the origin $0\in \mathfrak{g}\cong \mathbb{R}^n,$ respectively.

For $\varepsilon>0$ define
\begin{equation}
    \phi(y,\varepsilon):=\frac{1}{|B(e,\varepsilon)|}1_{B(e,\varepsilon)}.
\end{equation} 
Now, for any $j,$ define
\begin{equation}
    \phi_{j}(y):=\phi(y,2^{-\frac{1}{1-\theta}}\textnormal{diam}(I_j)^{\frac{1}{1-\theta}}),\,\,\,
\end{equation}
\begin{equation}
   \tilde{b}_{j}:=b_{j}(\cdot)*\phi_{j},\,\,
\end{equation}and 
\begin{equation}
    \tilde{b}:=\sum_{j}\tilde{b}_{j}.
\end{equation}Note that
\begin{equation}\label{bjk:re:def}
  Tb=\sum_{j} T{b}_{j}.
\end{equation}
It is important to mention that in \eqref{bjk:re:def} the sums on the right hand side only runs over $j$ whith $\textnormal{diam}(I_j)< c .$ Indeed,  for all $x\in G\setminus I^*,$  the property of the support $\textnormal{diam}(\textnormal{supp}(K))< c ,$ implies that for all $j$ with $\textnormal{diam}(I_j)\geq c ,$ we have that $$Tb_{j}= b_{j}\ast K=0.$$ So  we only require to analyse the case where $\textnormal{diam}(I_j)< c .$ Indeed, for $x\in G\setminus I^*,$ and $j$ such that  $\textnormal{diam}(I_j)\geq c ,$
$$  b_{j}\ast K(x)=\smallint\limits_{I_j}K(y^{-1}x)b_{j}(y)dy.  $$
Because, in the integral above $x\in G\setminus I^*$ and $y\in I_j,$ $$|y^{-1}x|=\textnormal{dist}(x,y)>\textnormal{diam}(I_j)> c ,$$ we have that the element $y^{-1}x$ is not in the support of $K$ and then the integral vanishes.

In Figure \ref{Fig2}  we compare the size of $I_j$  with the size of the support of $\phi_j.$
\begin{figure}[h]
\includegraphics[width=8cm]{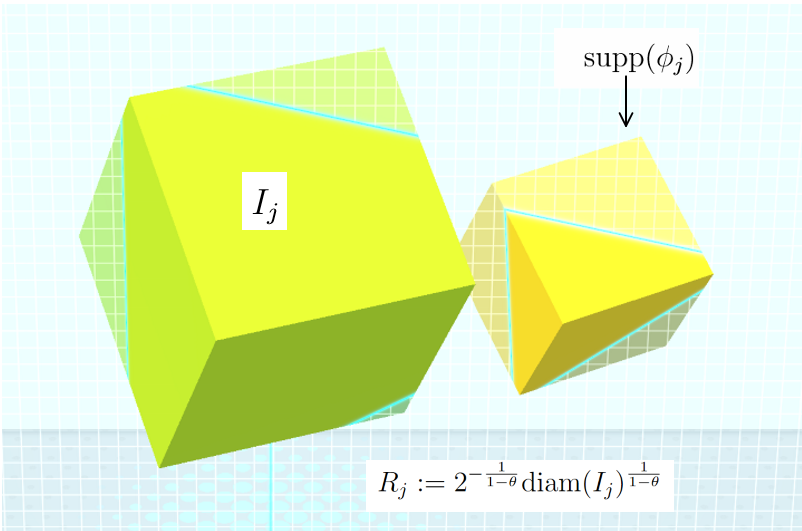}
\caption{ Let $j\in \mathbb{N}$. The replacement $\tilde b_{j}$ of $b_{j}$ is defined by  modifying its support.   To do so, we make the convolution between $b_j$ with the $L^1$-normalised function $\phi_j$ whose support is proportional to $R_j.$   }
 \label{Fig2}
\centering
\end{figure}
Now, going back to the analysis of \eqref{To:proof:2},
note that
\begin{align*}
  &|\{x\in G\setminus I^*:|Tb(x)|>\frac{\alpha}{2}\}|\\
  &\leq |\{x\in G\setminus I^*:|Tb(x)-T\tilde{b}(x)|>\frac{\alpha}{4}\}|+|\{x\in G\setminus I^*:|T\tilde b(x)|>\frac{\alpha}{4}\}|\\
  &\leq \frac{4}{\alpha}\Vert T(b-\tilde{b})\Vert_{L^1(G\setminus I^*)}+|\{x\in G\setminus I^*:|T\tilde b(x)|>\frac{\alpha}{4}\}|,
\end{align*}
and let us take into account the estimate:
\begin{align*}
    \Vert T(b-\tilde{b})\Vert_{L^1(G\setminus I^*)} &=\smallint\limits_{ G\setminus I^*}|Tb(x)-T\tilde{b}(x)|dx\\
    &\leq \sum_{j}\smallint\limits_{ G\setminus I^*}|Tb_{j}(x)-T\tilde{b}_{j}(x)|dx.
\end{align*} We are going to prove  that $T\tilde{b}$ and $T\tilde{b}_{j}$ are  good replacements for  $T{b}$ and $T{b}_{j},$ respectively, on the set $G\setminus I^*.$ 
Observe that 
\begin{align*}
  & \smallint\limits_{ G\setminus I^*}|Tb_{j}(x)-T\tilde{b}_{j}(x)|dx=\smallint\limits_{ G\setminus I^*}| b_{j}\ast K(x)-\tilde{b}_{j}\ast K(x)|dx \\
   &=\smallint\limits_{ G\setminus I^*}| b_{j}\ast K(x)-[{b}_{j}\ast \phi_j\ast K](x)|dx \\
   &=\smallint\limits_{ G\setminus I^*}\left|\smallint\limits_{I_j} K(y^{-1}x)b_{j}(y)dy-\smallint\limits_{I_j}(\phi_j\ast K)(y^{-1}x){b}_{j}(y)dy\right|dx \\
   &\leq\smallint\limits_{I_j} \smallint\limits_{ G\setminus I^*}| K(y^{-1}x)-\phi_j\ast K(y^{-1}x)|dx |b_j(y)|dy \\
    &\leq\smallint\limits_{I_j} \smallint\limits_{ |z|>\textnormal{diam}(I_j)}| K(z)-\phi_j\ast K
    (z)|dz |b_{j}(y)|dy\\
    &\leq\smallint\limits_{I_j} \smallint\limits_{ |z|>\textnormal{diam}(I_j)}| K(z)-\phi_j\ast K(z)|dz |b_{j}(y)|dy,
\end{align*}where, in the last line we have used the changes of variables $x\mapsto z=y^{-1}x,$ and then we observe that $|z|>\textnormal{diam}(I_j)$ when $x\in G\setminus I^*$ and $y\in I_j.$ Using that $\phi_j$ is   supported in a ball of radius  $$R_j:=2^{-\frac{1}{1-\theta}}\textnormal{diam}(I_j)^{\frac{1}{1-\theta}}$$ and that $\|\phi_j\|_{L^{\infty}}\leq 1/|B(e,R_j)|,$ we have that
\begin{align*}
   & \smallint\limits_{ |z|>\textnormal{diam}(I_j)  }| K(z)-\phi_j\ast K(z)|dz\\
   &=\smallint\limits_{ |z|>\textnormal{diam}(I_j)  }\left| K(z)\smallint\limits_{|y|<2^{-\frac{1}{1-\theta}}\textnormal{diam}(I_j)^{\frac{1}{1-\theta}}}\phi_{j}(y)dy-\smallint\limits_{|y|<2^{-\frac{1}{1-\theta}}\textnormal{diam}(I_j)^{\frac{1}{1-\theta}}}K(y^{-1}z)\phi_j(y)dy\right|dz\\
    &=\smallint\limits_{ |z|>\textnormal{diam}(I_j)  }\left|\, \smallint\limits_{|y|<2^{-\frac{1}{1-\theta}}\textnormal{diam}(I_j)^{\frac{1}{1-\theta}}}(K(z)-K(y^{-1}z))\phi_j(y)dy\right|dz\\
     &\leq   \smallint\limits_{|y|<2^{-\frac{1}{1-\theta}}\textnormal{diam}(I_j)^{\frac{1}{1-\theta}}} \smallint\limits_{ |z|>\textnormal{diam}(I_j)  }|K(y^{-1}z)-K(z)|dz|\phi_j(y)|dy\\
     &\leq \frac{1}{|B(e,R_j)|}  \smallint\limits_{|y|<R_j} \smallint\limits_{ |z|> 2R_j^{(1-\theta)}}|K(y^{-1}z)-K(z)|dz\,dy\\
     &\lesssim \frac{1}{R_j^n}  \smallint\limits_{|y|<R_j} \smallint\limits_{ |z|> 2R_j^{(1-\theta)}}|K(y^{-1}z)-K(z)|dz\,dy\\
     &\leq [K]_{H_{\infty,\theta}(G)},
\end{align*}where we have used that $2R_j^{1-\theta}=\textnormal{diam}(I_j).$ 
Now, the inequalities above allow us to finish the estimate of $\Vert T(b-\tilde{b})\Vert_{L^1(G\setminus I^*)}.$ Indeed,
\begin{align*}
    \Vert T(b-\tilde{b})\Vert_{L^1(G\setminus I^*)} &\leq \sum_{j} \smallint\limits_{I_j} \smallint\limits_{ G\setminus I^*}| K(y^{-1}x)-\phi_j\ast K(y^{-1}x)|dx |b_j(y)|dy \\
    &\leq\sum_{j}\smallint\limits_{I_j} \smallint\limits_{ |z|>\textnormal{diam}(I_j)} |K(z)-\phi_j\ast K(z)|dz |b_j(y)|dy\\
    &\lesssim [K]_{H_{\infty,\theta}(G)} \sum_{j}\smallint\limits_{I_j}|b_j(y)|dy\leq [K]_{H_{\infty,\theta}(G)}\Vert b \Vert_{L^1}\\
    &\lesssim [K]_{H_{\infty,\theta}(G)}\Vert f\Vert_{L^1}.
\end{align*}Putting together the estimates above we deduce that
\begin{align*}
    & |\{x\in G\setminus I^*:|Tb(x)|>\frac{\alpha}{2}\}|\leq   \frac{4}{\alpha}\Vert Tb-T\tilde{b}\Vert_{L^1(G\setminus I^*)}+|\{x\in G\setminus I^*:|T\tilde b(x)|>\frac{\alpha}{4}\}|\\
     &\lesssim \frac{4}{\alpha}[K]_{H_{\infty,\theta}(G)}\Vert f\Vert_{L^1}+|\{x\in G\setminus I^*:|T\tilde b(x)|>\frac{\alpha}{4}\}|.
\end{align*}Now, we will estimate the term in the right hand side of the previous inequality. Indeed, note that
$$
    |\{x\in G\setminus I^*:|T\tilde b(x)|>\frac{\alpha}{4}\}|\leq   |\{x\in G\setminus I^*:|T\tilde b(x)|^2>\frac{\alpha^2}{16}\}|
$$
$$\leq \frac{16}{\alpha^2}\Vert T\tilde{b}\Vert_{L^2}^2. 
$$ 
From now, let us consider the positive operator

\begin{equation}
    -\tilde{\Delta}=1+\mathcal{L}_G.
\end{equation}
Now, using  \eqref{decay} we deduce that $T(-\tilde{\Delta})^{\frac{n\theta}{4}}$ is bounded on $L^2,$ (see Remark \ref{Rem:Four:Conf}) and then
$$\Vert T\tilde{b}\Vert^2_{L^2}\leq \Vert T(-\tilde{\Delta})^{\frac{n\theta}{4}}\Vert_{\mathscr{B}(L^2)}^2\Vert(-\tilde{\Delta})^{-\frac{n\theta}{4}}\tilde{b}\Vert_{L^2}^2.$$

In order to estimate the $L^2$-norm, let us use the following lemma whose proof we postpone for a moment.
\begin{lemma}\label{lemma:Fefferman} The function $F:=(-\tilde{\Delta})^{-\frac{n\theta}{4}}\tilde{b}$ can be decomposed as the sums $F=F_1+F_2,$ where $  \Vert F_2\Vert_{L^2}^2\leq C\alpha\gamma\Vert f\Vert_{L^1},$ and $F_1$ is also a sum of  functions $F_{1}^{j}$ with the following property.
\begin{itemize}
    \item There exists $M_0\in \mathbb{N},$ and $A'>0,$ such that $F_{1}=\sum_{j:\textnormal{diam}(I_j)<1}F_{1}^{j}$,  $\Vert F_{1}^{j}\Vert_{L^2}^2\leq A'\alpha^2|I_j|,$ and for any $x\in \mathbb{R}^n,$ there at most $M_0$ values of $j$ such that  $F_1^j(x)\neq 0.$ 
\end{itemize}

\end{lemma}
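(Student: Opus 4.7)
The plan is to exploit the fact that $J := (-\tilde{\Delta})^{-n\theta/4}$ is a classical pseudo-differential operator on $G$ of negative order $-n\theta/2$; by the Sobolev embedding on the compact manifold $G$, it is bounded $J : L^p(G) \to L^2(G)$ for $p = 2/(1+\theta)$, and its right-convolution kernel $G_\theta$ is smooth off the identity with the locally-Euclidean pointwise bounds $|G_\theta(x)| \lesssim d(x,e)^{-n+n\theta/2}$ and $|X G_\theta(x)| \lesssim d(x,e)^{-n-1+n\theta/2}$ for every left-invariant vector field $X$ on $G$. Since $\theta<1$ gives $1/(1-\theta)>1$ and $\textnormal{diam}(I_j)<1$, one has $R_j = r_j^{1/(1-\theta)} \leq r_j$; by left-invariance of the metric, $\textnormal{supp}(\tilde{b}_j) \subset B(x_j, r_j + R_j) \subset B(x_j, 2 r_j) =: I_j^{**}$. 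The decomposition I propose is
\begin{equation*}
F_1^j := (J\tilde{b}_j)\cdot\mathbf{1}_{I_j^{**}}, \quad F_2^j := (J\tilde{b}_j)\cdot\mathbf{1}_{G\setminus I_j^{**}}, \quad F_1 := \sum_{j : \textnormal{diam}(I_j)<1} F_1^j, \quad F_2 := F - F_1.
\end{equation*}

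For the local bound I would combine the Sobolev inequality with the elementary interpolation $\|\tilde{b}_j\|_{L^p}^p \leq \|\tilde{b}_j\|_{L^\infty}^{p-1}\|\tilde{b}_j\|_{L^1}$. Using $\|\tilde{b}_j\|_{L^\infty} \leq \|b_j\|_{L^1}\|\phi_j\|_{L^\infty} \lesssim \alpha\gamma R_j^{-n\theta}$, $\|\tilde{b}_j\|_{L^1} \leq \|b_j\|_{L^1}\lesssim \alpha\gamma|I_j|$, and $|I_j|\sim R_j^{n(1-\theta)}$, the choice $p=2/(1+\theta)$ collapses the exponents to give $\|\tilde{b}_j\|_{L^p}^2 \lesssim (\alpha\gamma)^2|I_j|$; hence $\|F_1^j\|_{L^2}^2 \leq \|J\tilde{b}_j\|_{L^2}^2 \lesssim \alpha^2|I_j|$ after absorbing $\gamma^2$ into $A'$. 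The required finite-overlap $M_0$ of the family $\{I_j^{**}\}$ will follow from property (6) of the Calder\'on--Zygmund decomposition together with the doubling of the Riemannian volume on $G$, since $I_j^{**}$ is a fixed dilate of $I_j$.

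For the tail $F_2$, the strategy is to exploit the cancellation $\int \tilde{b}_j = 0$ (inherited from $b_j$ via $\int\phi_j = 1$). For $x \notin I_j^{**}$ one writes
\begin{equation*}
F_2^j(x) = \int_{I_j^{**}}\bigl[G_\theta(y^{-1}x) - G_\theta(x_j^{-1}x)\bigr]\tilde{b}_j(y)\,dy,
\end{equation*}
and applies the mean-value theorem in the exponential chart around $x_j$ combined with the gradient estimate for $G_\theta$ to obtain the pointwise bound $|F_2^j(x)| \lesssim r_j\,\|b_j\|_{L^1}\, d(x,x_j)^{-n-1+n\theta/2}$. The main obstacle I anticipate lies precisely in converting this pointwise decay into the claimed $L^2$-bound $\|F_2\|_{L^2}^2 \lesssim \alpha\gamma\|f\|_{L^1}$: one has to square $F_2 = \sum_j F_2^j$, control diagonal and off-diagonal contributions by a scale decomposition of the Calder\'on--Zygmund collection $\{I_j\}$ (so that no logarithmic loss over scales occurs), and then use $\sum_j \|b_j\|_{L^1} \lesssim \|f\|_{L^1}$ to pass from a quadratic-in-$\alpha$ to the desired linear-in-$\alpha$ estimate. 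Justifying the kernel bounds on $G$ (as opposed to $\mathbb{R}^n$) --- and in particular the fact that the derivative estimate on $G_\theta$ can be read in normal coordinates --- is where the diffeomorphism property of the exponential map emphasised in Section \ref{preliminaries} becomes indispensable.
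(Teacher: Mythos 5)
Your treatment of $F_1$ is a valid alternative to the paper's route. You invoke the critical Sobolev embedding for the order $-n\theta/2$ operator $J=(-\tilde\Delta)^{-n\theta/4}:L^{2/(1+\theta)}(G)\to L^2(G)$ and then interpolate $\|\tilde b_j\|_{L^\infty}\lesssim\alpha\gamma R_j^{-n\theta}$ with $\|\tilde b_j\|_{L^1}\lesssim\alpha\gamma|I_j|$; the exponent $p=2/(1+\theta)$ collapses the scales to $\|\tilde b_j\|_{L^p}^2\lesssim(\alpha\gamma)^2|I_j|$, which is exactly the paper's $\|F_1^j\|_{L^2}^2\lesssim\alpha^2\gamma^2|I_j|$. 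The paper instead uses Young's inequality, $\|\tilde b_j\ast k_\theta\|_{L^2}\le\|b_j\|_{L^1}\|\phi_j\ast k_\theta\|_{L^2}$, and then a rescaling in normal coordinates plus the $L^2$-boundedness of $(-\mathcal L_G)^{-n\theta/4}$ to prove $\|\phi_j\ast k_\theta\|_{L^2}^2\lesssim|I_j|^{-1}$. Your argument trades that local dilation computation for the off-the-shelf Sobolev embedding on the compact manifold; both work. (You should still verify the bounded overlap of the doubled balls $I_j^{**}$, which is not property (6) verbatim and needs the doubling of the Riemannian volume to transfer from $\{I_j\}$ to its fixed dilates.)

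The estimate of $F_2$, however, is where your proposal has a genuine gap, and you flag it yourself. Because the supports of the tails $F_2^j$ are the \emph{complements} of balls, they all overlap, and one cannot quasi-orthogonalise via finite overlap; your plan is to square the sum, control off-diagonal contributions by a scale decomposition, and then somehow pass from a quadratic-in-$\alpha$ to a linear-in-$\alpha$ bound. That last step is precisely where the difficulty sits, and the paper does not go that way at all. It \emph{avoids} squaring the sum by using $\|G_2\|_{L^2}^2\le\|G_2\|_{L^1}\|G_2\|_{L^\infty}$: the estimate $\|G_2\|_{L^1}\lesssim\|f\|_{L^1}$ is free because $k_\theta\in L^1(G)$ (the kernel of a $(-n\theta/2)$-order pseudo-differential operator on $G$ obeys $|k_\theta(x)|\lesssim|x|^{-n(1-\theta/2)}$, integrable for $\theta<1$) and then Young plus $\sum_j\|b_j\|_{L^1}\lesssim\|f\|_{L^1}$ finishes it; the estimate $\|G_2\|_{L^\infty}\lesssim\alpha\gamma$ is obtained \emph{without} any cancellation of $b_j$ and without a gradient bound on $k_\theta$, by noting that for $x\nsim I_j$ the function $\phi_j\ast k_\theta(\cdot^{-1}x)$ is essentially constant over $I_j$, so that
\begin{equation*}
|\tilde b_j\ast k_\theta(x)|\lesssim\Big(\tfrac{1}{|I_j|}\smallint\limits_{I_j}|b_j|\Big)\,\big(\mathbf 1_{I_j}\ast\phi_j\ast k_\theta\big)(x)\lesssim\alpha\gamma\,\big(\mathbf 1_{I_j}\ast\phi_j\ast k_\theta\big)(x),
\end{equation*}
and then the bounded overlap of $\{\mathbf 1_{I_j}\ast\phi_j\}_j$ together with $\|k_\theta\|_{L^1}<\infty$ controls the sum uniformly. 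This $L^1$--$L^\infty$ interpolation is the missing idea: it is what produces the single power of $\alpha\gamma$ in $\|F_2\|_{L^2}^2\lesssim\alpha\gamma\|f\|_{L^1}$ while bypassing cancellation, kernel regularity, and any cross-term or scale bookkeeping.
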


Let us continue with the proof of Theorem \ref{main:th}. 
Using  Lemma \ref{lemma:Fefferman} and the inequalities in \eqref{I:j} we have that
\begin{align*}
\Vert T\tilde{b}\Vert_{L^2}^2&\lesssim \Vert F_1\Vert_{L^2}^2+\Vert F_2\Vert_{L^2}^2 \lesssim \alpha\gamma\Vert f\Vert_{L^1}+ \sum_{j}\Vert F_1^{j} \Vert_{L^2}^2\\
&\lesssim \alpha\gamma\Vert f\Vert_{L^1}+ \sum_j\alpha^2|I_j|\lesssim \alpha\gamma\Vert f\Vert_{L^1}+\alpha^2(\gamma\alpha)^{-1}\Vert f\Vert_{L^1}\\
    &= (\gamma^{-1}+\gamma)\alpha\Vert f\Vert_{L^1}.
\end{align*}
Consequently
$$
    |\{x\in G\setminus I^*:|T\tilde b(x)|>\frac{\alpha}{4}\}|\lesssim \frac{16}{\alpha^2}\Vert T\tilde{b}\Vert_{L^2}^2 
\lesssim (\gamma^{-1}+\gamma)\alpha^{-1}\Vert f\Vert_{L^1}.
$$ Thus, we have proved that
\begin{equation}
   |\{x\in \mathbb{R}^n:|Tf(x)|>\alpha\}|\leq C_{\gamma,[K]_{H_{\infty,\theta}(G)}}\alpha^{-1}\Vert f\Vert_{L^1},
\end{equation}with $C:=C_{\gamma,[K]_{H_{\infty,\theta}(G)}}$ independent of $f.$ Because $\gamma$ is fixed  we have proved the weak $(1,1)$ type of $T.$ Note that the $L^p$-boundedness of $T$ can be deduced from Marcinkiewicz interpolation theorem and the standard duality argument.
Thus, the proof is complete once we proved the statement in Lemma \ref{lemma:Fefferman}. To do so, let us consider the right-convolution kernel  $k_\theta:=(-\tilde{\Delta})^{-\frac{n\theta}{4}}\delta$ of the operator $(-\tilde{\Delta})^{-\frac{n\theta}{4}}$ and let us split the function $(-\tilde{\Delta})^{-\frac{n\theta}{4}}\tilde{b}(x)$ as follows:
\begin{align*}
   (-\tilde{\Delta})^{-\frac{n\theta}{4}}\tilde{b}(x)&=\sum_{j} (-\tilde{\Delta})^{-\frac{n\theta}{4}}\tilde{b}_{j}(x)=\sum_{j} \tilde{b}_{j}\ast k_\theta(x)\\
   &=\sum_{j:x\in I_j } \tilde{b}_{j}\ast k_\theta(x)+\sum_{j:x\nsim I_j} \tilde{b}_{j}\ast k_\theta(x)=:G_{1}(x)+G_{2}(x),
\end{align*}where $G_{1}(x):=\sum_{j:x\sim I_j} \tilde{b}_{j}\ast k_\theta(x)$ and $G_{2}(x):=\sum_{j:x\nsim I_j} \tilde{b}_{j}\ast k_\theta(x).$  We have denoted by $x\sim I_j,$ if $x$ belongs to $I_j$ or to some $I_{j'}$ with non-empty intersection with $I_j.$ By the properties of these sets there are at most $M_0$ sets $I_{j'}$ such that $I_j\cap I_{j'}\neq \emptyset.$ Also, the notation $x\nsim I_j$ will be employed to define the opposite of the previous property.

Let us prove the estimate
\begin{equation}\label{esti:G2}
    \Vert G_2\Vert_{L^2}^2\leq C\alpha\gamma\Vert f\Vert_{L^1}.
\end{equation} Observe that
\begin{align*}
   \Vert G_2\Vert_{L^1} &=\smallint\limits_G| \sum_{j:x\nsim I_j} {b}_{j}\ast \phi_j\ast k_\theta(x)|dx \leq \sum_{j:x\nsim I_j}\smallint\limits_G| {b}_{j}\ast \phi_j\ast k_\theta(x)|dx \\
   &\leq  \sum_{j}\| {b}_{j}\ast \phi_j\ast k_\theta\|_{L^1}\leq  \sum_{j}\| {b}_{j}\Vert_{L^1}\Vert \phi_j\ast k_\theta\|_{L^1}. 
\end{align*}Note that $(-\tilde{\Delta})^{-\frac{n\theta}{4}}$ is a pseudo-differential operator of order $-n\theta/2,$ and consequently, its kernel satisfies the estimate
\begin{equation*}
    |k_{\theta}(x)|\leq C|x|^{{-(-\frac{n\theta}{2}+n)}}\lesssim |x|^{-{n(1-\frac{\theta}{2})}},\,x\in G\setminus\{e\}.
\end{equation*}The condition $0<\theta<1,$ implies that $k_\theta$ is an integrable distribution and then
\begin{equation*}
  \Vert \phi_j\ast k_\theta\|_{L^1} \lesssim \Vert \phi_j\|_{L^1} \| k_\theta\|_{L^1}=\| k_\theta\|_{L^1}<\infty.   
\end{equation*}
So, we have that
\begin{align*}
     \Vert G_2\Vert_{L^1}\lesssim \sum_{j}\|b_j\|_{L^1}\lesssim\Vert f \Vert_{L^1}.
\end{align*}So, for the proof of \eqref{esti:G2}, and in view of the inequality $ \Vert G_2\Vert^2_{L^2}\leq \Vert G_2\Vert_{L^1} \Vert G_2\Vert_{L^\infty}$ is enough to show that $ \Vert G_2\Vert_{L^\infty}\lesssim\alpha\gamma\Vert f \Vert_{L^1}.$
To do this, let us consider $j$ such that  $x\nsim I_j.$ Since $\tilde{b}_{j}\ast k_\theta={b}_{j}\ast \phi_j\ast k_\theta,$ one has that
\begin{align*}
 | {b}_{j}\ast \phi_j\ast k_\theta(x)|&\leq \smallint\limits_{I_j}|\phi_j\ast k_\theta(y^{-1}x)||b_j(y)|dy\leq \sup_{y\in I_j}|\phi_j\ast k_\theta(y^{-1}x)|  \smallint\limits_{I_j}|b_j(y)|dy\\
 &=\sup_{y\in I_j}|\phi_j\ast k_\theta(y^{-1}x)| |I_j|\times \frac{1}{|I_j|} \smallint\limits_{I_j}|b_j(y)|dy.
\end{align*}
To continue, we follow as in \cite[Page 26]{Fefferman1970} the observation of Fefferman, that in view of the property $x\nsim I_j,$ we have that $\phi_j\ast k_\theta(y^{-1}x)$ is essentially constant over the ball $I_j=B(x_j,r_j)$ and we can estimate
\begin{equation}
    \sup_{y\in I_j}|\phi_j\ast k_\theta(y^{-1}x)| |I_j|\lesssim \smallint\limits_{I_j}|\phi_j\ast k_\theta(y'^{-1}x)|dy'.
\end{equation}On the other hand, observe that the positivity of the kernel $k_\theta,$ and of $\phi_j$ leads to
\begin{align*}
    \smallint\limits_{I_j}|\phi_j\ast k_\theta(y'^{-1}x)|dy' \frac{1}{|I_j|} \smallint\limits_{I_j}|b_j(y)|dy
    &=\smallint\limits_{G}|\phi_j\ast k_\theta(y'^{-1}x)| \left( \frac{1}{|I_j|} \smallint\limits_{I_j}|b_j(y)|dy\right)1_{I_{j}}(y')dy'\\
    &= \left(\frac{1}{|I_j|} \smallint\limits_{I_j}|b_j(y)|dy\times1_{I_j}\right)\ast\phi_j\ast k_\theta(x).
\end{align*}Consequently, we have
\begin{align*}
    |G_2(x)|&\leq \sum_{j:x\nsim I_j} |\tilde{b}_{j}\ast k_\theta(x)|\lesssim  \sum_{j:x\nsim I_j}\left(\frac{1}{|I_j|} \smallint\limits_{I_j}|b_j(y)|dy\times1_{I_j}\right)\ast\phi_j\ast k_\theta(x)\\
    &\lesssim\sum_{j:x\nsim I_j}\gamma\alpha\times1_{I_j}\ast\phi_j\ast k_\theta(x)=\smallint\limits_{G}\sum_{j:x\nsim I_j}\gamma\alpha\times1_{I_j}\ast\phi_j(z) k_\theta(z^{-1}x)dz\\
    &\lesssim \gamma\alpha \Vert k_\theta\Vert_{L^1}\left\Vert \sum_{j:x\nsim I_j}1_{I_j}\ast\phi_j \right\Vert_{L^{\infty}}.
\end{align*} By observing that  the supports of the functions $1_{I_j}\ast\phi_j $'s have bounded overlaps we have that $\left\Vert \sum_{j:x\nsim I_j}1_{I_j}\ast\phi_j \right\Vert_{L^{\infty}}<\infty,$ and that $\Vert G_2\Vert_{L^\infty}\lesssim\gamma\alpha.$ 

It remains only to prove that $\Vert G_1\Vert_{L^2}^2\lesssim \alpha \Vert f\Vert_{L^1}.$ Let us define
\begin{equation}\label{F:j:iproof}G^{j}(x):= \begin{cases}{b}_{j}\ast \phi_j\ast k_\theta(x),& \text{ }x\in I_j,
\\0 ,& \,\text{otherwise. } \end{cases}
\end{equation}Then $G_1=\sum_{j} G^{j}$ and in view of the finite overlapping of the balls $I_{j}$'s, there is $M_0\in \mathbb{N},$ such that for any $x\in G,$ $G^{j}(x)\neq 0,$ for at most $M_0$ values of $j.$ Therefore, we have that
\begin{align*}
    \smallint_G|G_1(x)|^2dx &\leq M_0 \sum_{j}\smallint_G|G^{j}(x)|^2dx =\sum_{j}\smallint_{I_j}|{b}_{j}\ast \phi_j\ast k_\theta(x)|^2dx\\
    &\leq \sum_{j}\Vert {b}_{j}\Vert_{L^1}^2\Vert \phi_j\ast k_\theta\Vert_{L^2}^2\leq \sum_{j}\alpha^2\gamma^2|I_j|^2\Vert \phi_j\ast k_\theta\Vert_{L^2}^2.
\end{align*} 
Because our argument is purely local let us use the diffeomorphism $\omega=\textnormal{exp}:\nu\rightarrow \nu'$ in \eqref{W:EXP}. Because ${B}(e,c)\subset\nu',$   we can estimate the norm $\Vert \phi_j\ast k_\theta\Vert_{L^2}$ in a small neighbourhood $\nu'$ of the identity element $e\in G.$  Note that for any $j$ with $\textnormal{diam}(I_j)< c ,$ we have the inclusion $B(e,R_j)\subset \nu'.$ So, we have
$$ \Vert \phi_j\ast k_\theta\Vert_{L^2}^2=\Vert(1+\mathcal{L}_G)^{-\frac{n\theta}{4}}\phi_j\Vert^2_{L^2}=\Vert\phi_j\Vert_{H^{-n\theta/2}}\asymp \Vert(1+\mathcal{L}_{\mathbb{R}^n})^{-\frac{n\theta}{4}}{\phi}'_j\Vert_{L^2}^2, $$
where $$\phi_j'=\phi_j\circ \omega=\frac{1}{B(e,R_j)}1_{B(e,R_j)}\circ \omega,\,\,\textnormal{supp}(\phi_j')\subset \nu.$$

Note that $(1+\mathcal{L}_{\mathbb{R}^n})^{-\frac{n\theta}{4}}$ is bounded on $L^2,$ and clearly $\phi_j'\in L^2,$ so that $\phi_j'\in \textnormal{Dom}((1+\mathcal{L}_{\mathbb{R}^n})^{-\frac{n\theta}{4}}).$ Also, 

\begin{equation}
    \textnormal{supp}[\phi_j']\subset{B}(0,R_j'),\,R_{j}'\sim R_{j}.
\end{equation}
Define for any $j,$ $$\psi_{j}'(X):=|B(0,R_j')|\phi_{j}'(R_j'X),\,X\in \nu.$$
Then, one has the identity
\begin{equation*}
    \phi_j'(X)=\frac{1}{|B(0,R_j')|}\psi_j'\left(\frac{X}{R_j'}\right),\,X\in \nu.
\end{equation*} Observe that $|B(0,R_j')|=v_{n}{R_{j}'}^n,$ where $v_n$ is the volume of the unite ball $B(0,1).$ Also, note that $\Vert \psi_j'\Vert_{L^\infty}=1,$ and $\textnormal{supp}(\psi_j')\subset B(0,1).$  Using the Plancherel theorem, we have that
\begin{align*}
  \Vert(1+\mathcal{L}_{\mathbb{R}^n})^{-\frac{n\theta}{4}}{\phi}_j'\Vert_{L^2}&=\Vert (1+|\eta|^2)^{-\frac{n\theta}{4}}\mathscr{F}_{\mathbb{R}^n}[\phi_j'](\eta)\Vert_{L^2} \\
  &= \Vert (1+|\eta|^2)^{-\frac{n\theta}{4}}\mathscr{F}_{\mathbb{R}^n}\left[\frac{1}{|B(0,R_j')|}\psi_j'\left(\frac{\cdot}{R_j'}\right)\right]\left(\eta\right)\Vert_{L^2} \\
  &= (1/v_n)\Vert (1+|\eta|^2)^{-\frac{n\theta}{4}}\mathscr{F}_{\mathbb{R}^n}\left[\frac{1}{{R_j'}^n}\psi_j'\left(\frac{\cdot}{R_j'}\right)\right]\left(\eta\right)\Vert_{L^2} \\
  &= (1/v_n) \Vert (1+|\eta|^2)^{-\frac{n\theta}{4}}\mathscr{F}_{\mathbb{R}^n}[\psi_j']\left(R_j'\eta\right)\Vert_{L^2}.
\end{align*}
Observe that
\begin{align*}
    \Vert (1+|\eta|^2)^{-\frac{n\theta}{4}}\mathscr{F}_{\mathbb{R}^n}[\psi_j']\left(R_j\eta\right)\Vert_{L^2}^2&=\smallint\limits|(1+|\eta|^2)^{-\frac{n\theta}{4}}\mathscr{F}_{\mathbb{R}^n}[\psi_j']\left(R_j\eta\right)|^2d\eta\\
    &={R'_{j}}^{-n}\smallint\limits|(1+|{R'_j}^{-1}z|^2)^{-\frac{n\theta}{4}}\mathscr{F}_{\mathbb{R}^n}[\psi_j'](z)|^2dz\\
    &= {R'_{j}}^{-n}{R'_{j}}^{n\theta}\smallint\limits|(R_{j}'+|z|^2)^{-\frac{n\theta}{4}}\mathscr{F}_{\mathbb{R}^n}[\psi_j'](z)|^2dz\\
    &\leq {R'_{j}}^{-n}{R'_{j}}^{n\theta}\smallint\limits|(|z|^2)^{-\frac{n\theta}{4}}\mathscr{F}_{\mathbb{R}^n}[\psi_j'](z)|^2dz\\
    &\asymp {R'_{j}}^{-n}{R'_{j}}^{n\theta}\smallint\limits|(-\Delta)^{-\frac{n\theta}{4}}\psi_j'(x)|^2dx\\
    &= R_{j}^{-n(1-\theta)}\Vert (-\Delta)^{-\frac{n\theta}{4}} \psi_j'\Vert_{L^2}^2.
\end{align*}
By the compactness of $G,$ for all $\varepsilon>0,$ there exists a finite number of elements $x_{0}^\varepsilon=e_{G},x_{k}^\varepsilon,$ $1\leqslant k\leqslant N_0(\varepsilon),$ in $G,$ and some smooth functions $\chi_{k}^\varepsilon\in C^\infty(G,[0,1]),$ supported in $B(e_G,\frac{\varepsilon}{2}),$ such that
\begin{equation}\label{chij}
    G=\bigcup_{k=1}^{N_0(\varepsilon)}B(x_{k}^\varepsilon,\varepsilon/4),\,\,\,\textnormal{   and  }\sum_{j=0}^{N_0(\varepsilon)}\chi_{k}^\varepsilon(x_k^{-1} x)=1,\,\,\, x\in G.
\end{equation}
Note that  for $\varepsilon>0$ small enough, the support of any $x\mapsto\chi_{k}^\varepsilon(x_k^{-1} x)$ is inside of $\nu'.$ In consequence, we can estimate
\begin{align*}
    \|(-\Delta)^{-\frac{n\theta}{4}}\psi_j'\|_{L^2}& \asymp \|(-\mathcal{L}_G)^{-\frac{n\theta}{4}}\psi_j'\circ \textnormal{exp}\|_{L^2}\leq \sum_{j=0}^{N_0(\varepsilon)}\|(-\mathcal{L}_G)^{-\frac{n\theta}{4}}[\psi_j'\circ \textnormal{exp}\cdot \chi_{k}^\varepsilon(x_k^{-1} \cdot)]\|_{L^2}\\
    &\lesssim  \sum_{j=0}^{N_0(\varepsilon)}\|\psi_j'\circ \textnormal{exp}\cdot \chi_{k}^\varepsilon(x_k^{-1} \cdot)\|_{L^2} \asymp  \|\psi_j'\circ \textnormal{exp}\|_{L^2} \asymp  \|\psi_j'\|_{L^2},
\end{align*}
where we have used the $L^2$-boundedness of the operator $(-\mathcal{L}_G)^{-\frac{n\theta}{4}}$ in view of the Stein identity (see \cite[Page 58]{SteinIdentity})    $$(-\mathcal{L}_G)^{-\frac{n\theta}{4}}=\left(-\mathcal{L}_G+\textnormal{Proj}_{\textnormal{Ker}(-\mathcal{L}_G)}\right)^{-\frac{n\theta}{4}}-\textnormal{Proj}_{\textnormal{Ker}(-\mathcal{L}_G)},$$ and of Remark 2.4 of \cite{RuzhanskyWirth2015}. Indeed, we have that $(-\mathcal{L}_G)^{-\frac{n\theta}{4}}$ is a pseudo-differential operator on $G$ of order $-n\theta/2$ and hence bounded on $L^2(G).$ Note that, this argument does not apply in the case of $\mathbb{R}^n$ because the spectrum of the Euclidean Laplacian is continuous and the negative powers of $-\Delta_{\mathbb{R}^n}$ are not pseudo-differential operators while the powers of  $1-\Delta_{\mathbb{R}^n}$ are pseudo-differential operators with symbols in the Kohn-Nirenberg classes, see e.g. Taylor \cite{Taylorbook1981}. 

Continuing with the proof, since $\Vert\psi_j\Vert_{L^\infty}=1$ and $\psi_j$ is supported in the unit ball, we have that $\Vert \psi_j\Vert_{L^2}\leq 1.$ Consequently,
$$ \Vert \phi_j\ast k_\theta\Vert_{L^2}^2\lesssim R_{j}^{-n(1-\theta)}\sim |I_j|^{-1}. $$
Finally, we deduce that
\begin{align*}
    \smallint_G|G_1(x)|^2dx &\leq \sum_{j}\alpha^2\gamma^2|I_j|^2\Vert \phi_j\ast k_\theta\Vert_{L^2}^2\lesssim \sum_{j} \alpha^2\gamma^2|I_j|^2|I_j|^{-1}=\alpha^2\gamma^2\sum_{j}|I_j|\\
    &\lesssim \alpha\gamma\Vert f\Vert_{L^1}\lesssim_\gamma \alpha\Vert f\Vert_{L^1},
\end{align*} in view of \eqref{I:j}. Consequently, we can take $F_2:=G_2,$ $F_1:=G_1$ and  $F_{1}^{j}:=G_{1}^{j}.$ Thus, the proof of  Lemma \ref{lemma:Fefferman} is complete as well as the proof of Theorem  \ref{main:th}.
\end{proof}

\section{Examples}\label{Examples:t:su2:OFM}
In this section we give some applications of Theorem \ref{main:th}  on the torus, on $\textnormal{SU}(2)\cong \mathbb{S}^3$ and for oscillating multipliers. We refer the reader to \cite[Section 4]{CR20221} for a variety of examples on the boundedness of oscillating singular integrals on Lie groups of polynomial growth.
\subsection{Oscillating integrals on the torus $\mathbb{T}^n$}
 Let  $G=\mathbb{T}^n\equiv \mathbb{R}^n/\mathbb{Z}^n,$ be the $n$-torus.  In this case we have the identification $\widehat{\mathbb{T}}^n=\{e_{\ell}\}_{\ell\in \mathbb{Z}^n}\sim \mathbb{Z}^{n}$ for its unitary dual. We have denoted $e_{\ell}$ to the exponential function $e_{\ell}(x)=e^{i2\pi \ell\cdot x},$ $x=(x_1,\cdots ,x_n)\in \mathbb{T}^n.$

In terms of the Fourier transform of a distribution $K$ on the torus
\begin{equation}
    \widehat{K}(\ell):=\smallint_{\mathbb{T}^n}e_{-\ell}(x)K(x)dx,\,\,\ell \in \mathbb{Z}^n,
\end{equation}the Fourier transform condition \eqref{LxiG} becomes equivalent to the estimate
\begin{equation}\label{LxiT:n}
  \forall \ell\in \mathbb{Z}^n,\,\,  | \widehat{K}(\ell)|\leq C\langle \ell\rangle^{-\frac{n\theta}{2}},\,\ \langle \ell\rangle:=(1+4\pi^2|\ell|^2)^{\frac{1}{2}}\sim |\ell|:=\sqrt{\ell_1^2+\cdots+ \ell_n^2}.
\end{equation}Note that the kernel condition \eqref{GS:CZ:cond} takes the form
\begin{equation}\label{Con:kern:torus}
    [K]_{H_{\infty,\theta}}=\sup_{R>0}\sup_{|y|<R} \smallint\limits_{|x|\geq 2R^{1-\theta}}|K(x-y)-K(x)|dx dy  <\infty.
\end{equation}In view of Theorem \ref{main:th}, a convolution operator $T$ associated to a convolution kernel $K,$ satisfying the Fourier transform condition \eqref{LxiT:n} and the smoothness condition  \eqref{Con:kern:torus}   is of weak (1,1) type, that is $T:L^{1}(\mathbb{T}^n)\rightarrow L^{1,\infty}(\mathbb{T}^n)$ is bounded. As the referee of this manuscript pointed out, in the  case of the torus $\mathbb{T}^n$ this continity result should also be just a special case of the known result due to Fefferman \cite{Fefferman1970} on $\mathbb{R}^n$ for a  distributions  $K$ with its support small enough as a consequence of a periodisation argument (see e.g. \cite[Chapters III and IV]{Ruz}). 

\subsection{Oscillating integrals on $\textnormal{SU}(2)\cong \mathbb{S}^3$}\label{SU2}Let us consider the compact Lie group of complex unitary $2\times 2$-matrices  
$$ \textnormal{SU}(2)=\{X=[X_{ij}]_{i,j=1}^{2}\in \mathbb{C}^{2\times 2}:X^{*}=X^{-1}\},\,X^*:=\overline{X}^{t}=[\overline{X_{ji}}]_{i,j=1}^{2}. $$
Let us consider the left-invariant first-order  differential operators $$\partial_{+},\partial_{-},\partial_{0}: C^{\infty}(\textnormal{SU}(2))\rightarrow C^{\infty}(\textnormal{SU}(2)),$$ called creation, annihilation, and neutral operators respectively, (see Definition 11.5.10 of \cite{Ruz}) and let us define 
\begin{equation*}
    X_{1}=-\frac{i}{2}(\partial_{-}+\partial_{+}),\, X_{2}=\frac{1}{2}(\partial_{-}-\partial_{+}),\, X_{3}=-i\partial_0,
\end{equation*}where $X_{3}=[X_1,X_2].$ The system $X=\{X_1,X_2,X_3\}$ is an orthonormal basis of the Lie algebra $\mathfrak{su}(2)$ of $\textnormal{SU}(2),$ and its positivie Laplacian is given by
\begin{equation}
    \mathcal{L}_{\textnormal{SU}(2)}=-X_1^2-X_2^2-X_3^2=-\partial_0^2-\frac{1}{2}[\partial_+\partial_{-}+\partial_{-}\partial_{+}].
\end{equation}
We record that the unitary dual of $\textnormal{SU}(2)$ (see \cite{Ruz}) can be identified as
\begin{equation}
\widehat{\textnormal{SU}}(2)\equiv \{ [t_{l}]:2l\in \mathbb{N}, d_{l}:=\dim t_{l}=(2l+1)\}\sim \frac{1}{2}\mathbb{N}.
\end{equation}
There are explicit formulae for $t_{l}$ as
functions of Euler angles in terms of the so-called Legendre-Jacobi polynomials, see \cite{Ruz}. Again, by following e.g.  \cite{Ruz},  the spectrum of the positive Laplacian $\mathcal{L}_{\textnormal{SU}(2)}$ can be indexed by the sequence
$$\lambda_\ell:= \ell(\ell+1),\quad \ell\in \frac{1}{2}\mathbb{N}.$$
Because $n=\dim(\textnormal{SU}(2))=3,$ the Fourier transform condition \eqref{LxiG}  takes the form
 \begin{equation}\label{Fouriergrowth:2SU2}
      \exists C>0,\,\forall  \ell\in\frac{1}{2} \mathbb{N},\quad \Vert \widehat{K}(\ell) \Vert_{\textnormal{op}}\leq C(1+\ell(\ell+1))^{-\frac{3\theta}{4}}\sim (1+\ell)^{-\frac{3\theta}{2}},
    \end{equation}
where $\widehat{K}(\ell)=\smallint_{\textnormal{SU}(2)}K(Z)t_{\ell}(Z)^*dZ,$ is the Fourier transform of the group of $K.$
In view of Theorem \ref{main:th}, if  
    $K$ satisfies \eqref{Fouriergrowth:2SU2} and the kernel condition 
    \begin{equation}\label{GS:CZ:cond:22'SU2}
        [K]_{H_{\infty,\theta}}:=\sup_{R>0}\sup_{|Y|<R} \smallint\limits_{|X|\geq 2R^{1-\theta}}|K(Y^{-1}X)-K(X)|dX dY  <\infty,
    \end{equation} then $T$ is of weak (1,1) type, that is  $T:L^1(\textnormal{SU}(2))\rightarrow L^{1,\infty}(\textnormal{SU}(2))$ extends to a bounded operator. In \eqref{GS:CZ:cond:22'SU2},  $|X|$ denotes the  norm of  $X\in \textnormal{SU}(2)$ with respect to the geodesic distance on $\textnormal{SU}(2)$.
    
\subsection{Application to oscillating multipliers}\label{Sub:sec:L} Let us illustrate our main Theorem \ref{main:th} in the context of oscillating Fourier multipliers. On $G,$ the prototype of a oscillating singular integral is the spectral multiplier of  $\mathcal{L}_G,$ given by
\begin{equation}\label{t:theta:clg}
    T_{\theta}(\mathcal{L}_G):=(1+\mathcal{L}_G)^{-\frac{n\theta}{4}}e^{i (1+\mathcal{L}_G)^{\frac{\theta}{2}}}, \, \,0\leq \theta<1.
\end{equation}Indeed, $T_{\theta}(\mathcal{L}_G)$ is a convolution operator with right-convolution kernel $K=K_\theta$ whose Fourier transform is given by
$$
    \widehat{K}(\xi)=\langle\xi\rangle^{-\frac{n\theta}{2}}{e^{i\langle\xi\rangle^\theta}},\,\langle\xi\rangle:=(1+\lambda_{[\xi]})^{\frac{1}{2}},\, [\xi]\in \widehat{G}.$$ Let $d(x,y)$ be the geodesic distance on $G$ induced by the Riemannian metric $g$. It was proved by Chen and Fan in \cite{Chen}  that $K(x)$ behaves essentially as $ c_nd(x,e)^{-n}e^{ic_{n}'d(x,e)^{\theta'}},$ where
 $ \theta'=\frac{\theta}{\theta-1},$ and consequently that $$|\nabla K(x)|=|(X_1 K(x),\cdots, X_nK(x))|\lesssim d(x,e)^{-n-1+\theta'},$$ from which it is well known that $K$ satisfies \eqref{GS:CZ:cond}, that is,
 $$   [K]_{H_{\infty,\theta}}:=\sup_{0<R\leq 1}\sup_{|y|\leq R}  \smallint\limits_{|x|\geq 2R^{1-\theta}}|K(y^{-1}x)-K(x)|dx  <\infty, $$
justifying the term {\it `oscillating'} for this family of multipliers. Using a smooth cut-off function $\psi$ with small compact support (for instance assume that $\textnormal{diam}(\textnormal{supp}(\psi))<1$), one can write the operator $T_\theta(\mathcal{L}_G)=T_1+T_2,$ with $T_1$ associated to the right-convolution kernel $K_1=\psi K,$ and $K_2=K-K_1,$ and then $T_2$ is bounded on $L^1(G).$ In view of Theorem \ref{main:th}, one can use that $T_1$ is of weak (1,1) type, and then one can deduce that $T_\theta(\mathcal{L}_G)$ extend to an operator of weak (1,1) type. 
\begin{remark}\label{Coordinates:approach}
By following the analysis of pseudo-differential operators under local coordinates systems, one can prove that for $0\leq \theta<\frac{1}{2},$ the operator $T_{\theta}(\mathcal{L}_G)$ belongs to the H\"ormander class $\textnormal{Op}(S^{m}_{1-\theta,\theta})$ on $G,$ with the order $m=-\frac{n\theta}{2},$ see e.g the book of M. Taylor \cite{Taylorbook1981} for details.

By microlocalising the Fefferman weak (1,1) estimate in \cite{Fefferman1970}, we deduce that for any $0\leq \theta<\frac{1}{2},$ $T_{\theta}(\mathcal{L}_G)$ extends to an operator of weak $(1,1)$ type. This is consequence of the fact that for all  $0\leq \theta<\frac{1}{2},$ the class $\textnormal{Op}(S^{m}_{1-\theta,\theta})$  is invariant under changes of coordinates. However, the range $\frac{1}{2}\leq \theta<1$ can be covered by our Theorem \ref{main:th}. 

We end this remark by observing that the boundedness of \eqref{t:theta:clg} from the Hardy space $H^1(G)$ into $L^1(G)$ has been proved by Chen and Fan in \cite{Chen} in the complete range $0\leq \theta<1$.
\end{remark}
\begin{remark}
Other generalisations on compact Lie groups of estimates for oscillating integrals to pseudo-differential operators have been considered in the papers \cite{Cardona2,RuzhanskyDelgado2017} and \cite{RuzhanskyWirth2015}. We refer the reader to \cite[Chapter V]{CR20} for Fourier transform conditions of Fourier multipliers of weak $(1,1)$ type on compact Lie groups in subelliptic settings.
\end{remark}

\bibliographystyle{amsplain}

\begin{thebibliography}{99}

\bibitem{alexo}
 { Alexopoulos, G.}
 {Spectral multipliers on {L}ie groups of polynomial growth},
  {Proc. Amer. Math. Soc.,}
  {120},
    {973--979}, (1994).


\bibitem{BS}  Baernstein, A.,   Sawyer, E. T. Embedding and multiplier theorems for Hp(Rn), Mem. Am. Math. Soc.,
53, no. 318, iv--82, (1985).

\bibitem{RobertoBramati} Bramati, R.,  Ciatti, P.,  Green, J.,   Wright, J. Oscillating spectral multipliers on groups of Heisenberg type, Rev. Mat. Iberoam., to appear, (2021)

\bibitem{Calderon1} Calder\'on,  A. P.,     Vaillancourt, R. On   the   boundedness   of      pseudo-differential perators,  J.  Math.  Soc. Japan,  23,  374--378, (1971).

\bibitem{Calderon2} Calder\'on,  A. P.,     Vaillancourt, R.     A  class  of  bounded pseudo-differential  operators, Proc.  Nat. Acad.  Sci. USA,  69,   1185--1187, (1972).

\bibitem{CalderonZygmund1952} Calder\'on, A. P., Zygmund, A. On the existence of certain singular integrals. Acta Math., 88, 85--139
(1952). 



\bibitem{Carbery}  Carbery, A. Variants of the Calder\'on-Zygmund theory for Lp-spaces, Rev. Mat. Iberoamericana, 2(4), 381--396, (1986).

\bibitem{Cardona2} Cardona, D. Besov continuity of pseudo-differential operators on compact Lie groups revisited.  C. R. Math. Acad. Sci. Paris Vol. 355(5),  533--537,  (2017). 

\bibitem{CardonaDelgadoRuzhansky} Cardona, D., Delgado, J., Ruzhansky, M. Lp-bounds for pseudo-differential operators on graded Lie groups. J. Geom. Anal.,  31, 11603--11647, (2021).

\bibitem{CR20} Cardona, D., Ruzhansky, M. Pseudo-differential operators and Fourier integral operators on compact Lie groups, submitted. arXiv:2008.09651.

\bibitem{CR211} Cardona, D., Ruzhansky, M. Fourier multipliers for Triebel-Lizorkin spaces on compact Lie groups., to appear in Collect. Math., arXiv:2101.12314, (2021).

\bibitem{CRgraded} Cardona, D., Ruzhansky, M. Fourier multipliers for Triebel-Lizorkin spaces on graded Lie groups. arXiv:2101.05856.


\bibitem{CR20221} Cardona, D. Ruzhansky, M. Boundedness of oscillating singular integrals on Lie groups of polynomial growth, submitted. arXiv:2201.12883.

\bibitem{CR20222} Cardona, D. Ruzhansky, M. Weak (1,1) continuity and $L^p$-theory for oscillating singular integral operators, submitted. arXiv:2201.12881.

\bibitem{Chen} Chen, J., Fan, D. Central oscillating multipliers on compact Lie groups, Math. Z., 267, 235--259, (2011).


\bibitem{Sikora}  Chen, P.,  Ouhabaz,  E. M.,  Sikora, A.,  Yan, L. Restriction estimates, sharp spectral multipliers and endpoint estimates for Bochner-Riesz means.  arXiv:1202.4052.

\bibitem{AdHonorem}  C\'ordoba, A.,   Stein, E.,   Tao, T.,   Nirenberg, L.,   Kohn, J. J.,   Chang, S.-Y. A., 
 Graham, R. C.,  C\'ordoba, D.,   Klartag, B.,  Fr\"ohlich, J.,  Seco, L.,  Weinstein, M. 
Ad honorem Charles Fefferman. Notices Amer. Math. Soc. 64(11), 1254--1273, (2017).



\bibitem{CoifmanWeiss2}  Coifman, R., Weiss, G. Multiplier transformations of functions on SU(2) and $\Sigma_2$, Rev. Un. Mat. Argentina, 25,  145--166, (1970).


\bibitem{CoifmandeGuzman}  Coifman, R. R.,   De Guzm\'an, M. Singular integrals and multipliers on homogeneous spaces. Rev. un. Mat. Argentina,  137--143, (1970).



\bibitem{CoifmanWeiss}  Coifman, R.,   Weiss, G. Analyse harmonique non-commutative sur certains espaces homog\'enes. (French) \'Etude de certaines int\'egrales singuli\'eres. Lecture Notes in Mathematics, Vol. 242. Springer-Verlag, Berlin-New York, 1971. v+160 pp.

\bibitem{CowlingSikora}  Cowling, M.,   Sikora, A.   A spectral multiplier theorem for a sublaplacian on SU(2). Math. Z. 238, 1--36, (2001).


\bibitem{RuzhanskyDelgado2017} Delgado, J., Ruzhansky M., $L^p$-bounds for pseudo-differential operators on compact Lie groups,  J. Inst. Math. Jussieu, 18(3),  531--559, (2019). 

\bibitem{Duoandikoetxea2000} Duoandikoetxea, J. {Fourier Analysis. } {29}, American Mathematical Society, Providence (2000).



\bibitem{Fefferman1970} Fefferman, C. Inequalities for strongly singular integral operators, Acta Math. 24  9--36, (1970).


\bibitem{FeffermanStein1972} Fefferman, C., Stein, E. $H^p$ spaces of several variables. Acta Math., 129, 137--193, (1972). 



\bibitem{FR}  Fischer, V.,  Ruzhansky, M. Fourier multipliers on graded Lie groups.  Colloq. Math.,  165,  1--30, (2021).

\bibitem{Folland1999} Folland, G. B.  Real analysis: Modern techniques and their applications. New York: Wiley. (1999).

\bibitem{GrafakosBook} Grafakos, L. Classical Fourier analysis, Third edition, Graduate Texts in Mathematics, vol. 249,
Springer, New York, 2014.



\bibitem{GrafakosStockdale} Grafakos, L., Stockdale, C. B. A limited-range Calder\'on-Zygmund theorem. Bull. Hellenic Math. Soc.,
63, 54--63, (2019).

\bibitem{Grafakos}  Grafakos, L. Some remarks on the Miklhin-H\"ormander and Marcinkiewicz multiplier theorems: a short historical account and a recent improvement. J.  Geom. Anal., to appear in a special volume in honor of E. M. Stein.

\bibitem{Grafakosetal}  Grafakos, L.,  Honz\'ik, P.,  Ryabogin, D.  On the p-independence property of Calder\'on-Zygmund theory,
Journal fr die Reine Ang. Math. (Crelles Journal). 602, 227--234, (2007).

\bibitem{Hardy1913}   Hardy, G.  H.  A  theorem concerning  Taylor's series,  Quart. J.  Pure  Appl. Math., 44, 
147--160, (1913).



\bibitem{Hebish} Hebish, W. Calder\'on Zygmund decompositions on amenable groups. arXiv:1810.03566.

\bibitem{Hirschman1956}  Hirschman, I. I., Multiplier transformations I,  Duke Math. J.,   222--242, (1956).

\bibitem{Hormander1960} H\"ormander, L. Estimates for translation invariant operators in Lp spaces,
Acta Math., 104, 93--139, (1960).

\bibitem{Guorong} Hong, Q., Hu, G., Ruzhansky, M. Fourier multipliers for Hardy spaces on graded Lie groups. arXiv:2101.07499.

\bibitem{BMO} John, F., Nirenberg, L.  On functions of bounded mean oscillation, Communications on Pure and Applied Mathematics, 14(3), 415--426, (1961).

\bibitem{MartiniMullerNicolussi} Martini, A.,   M\"uller, D.,  Nicolussi Golo, S. Spectral multipliers and wave equation for sub-Laplacians: lower regularity bounds of Euclidean type. J. Eur. Math. Soc.,  (2022). Doi:10.4171/JEMS/1191.




\bibitem{Ruz} Ruzhansky M., Turunen V. Pseudo-differential Operators and Symmetries: Background Analysis and Advanced Topics, Birkhauser, Basel, 2010. 724pp. 


\bibitem{RuzhanskyWirth2015} Ruzhansky, M., Wirth, J. $L^p$ Fourier multipliers on compact Lie groups, Math. Z., 280, 621--642, (2015).

\bibitem{Seeger} Seeger, A. Some inequalities for singular convolution operators in Lp-spaces, Trans. Amer. Math. Soc.,
308(1), 259--272, (1988).

\bibitem{SSS}
Seeger, A.,  Sogge, C. D.,   Stein, E. M.
\newblock Regularity properties of {F}ourier integral operators.
\newblock { Ann. of Math.}, 134(2), 231--251, (1991).

\bibitem{SteinIdentity}  Stein, E. M. Topics in harmonic analysis related to the Littlewood-Paley theory, Vol. 63 of Annals of Mathematics Studies, Princeton University Press, 1970.

\bibitem{Tao} Tao, T. The weak-type $(1,1)$ of Fourier integral operators of order $-(n - 1)/2,$  J. Aust. Math. Soc., {76}(1),  1--21, (2004).

\bibitem{Taylorbook1981} Taylor, M.  Pseudodifferential Operators, Princeton Univ. Press, Princeton, N.J., 1981.

\bibitem{Wainger1965}  Wainger, S.  Special trigonometric series in $k$-dimensions, Mem. Amer. Math. Soc., 59, (1965).

\bibitem{NWeiss}  Weiss, N. $L^p$-estimates for bi-invariant operators on compact Lie groups, Amer. J. Math., 94, 103--118, (1972).

\end{thebibliography}

\end{document}